\newcommand{\R}{\mathbb{R}}
\newcommand{\beq}{\begin{eqnarray}}
\newcommand{\eeq}{\end{eqnarray}}
\newcommand{\bq}{\begin{equation}}
\newcommand{\eq}{\end{equation}}
\newcommand{\beqn}{\begin{eqnarray*}}
\newcommand{\eeqn}{\end{eqnarray*}}
\newcommand{\bex}{\begin{exo}}
\newcommand{\eex}{\end{exo}}
\newcommand{\ben}{\begin{enumerate}}
\newcommand{\een}{\end{enumerate}}
\newtheorem{th1}{{\bf Theorem}}[section]
\newtheorem{thm}[th1]{{\bf Theorem}}
\newtheorem{lem}[th1]{{\bf Lemma}}
\newtheorem{prop}[th1]{{\bf Proposition}}
\newtheorem{rem}[th1]{\bf Remark}
\newtheorem{defi}[th1]{\bf Definition}
\definecolor{lime}{HTML}{A6CE39}
\DeclareRobustCommand{\orcidicon}{
	\begin{tikzpicture}
	\draw[lime, fill=lime] (0,0) 
	circle [radius=0.16] 
	node[white] {{\fontfamily{qag}\selectfont \tiny ID}};
	\draw[white, fill=white] (-0.0625,0.095) 
	circle [radius=0.007];
	\end{tikzpicture}
	\hspace{-2mm}
}
\author[A. M. Alqaied \& T. Saanouni]{Alaa Mohammed Alqaied \& Tarek Saanouni$^*$\orcidC{}}
\thanks{* Corresponding author.}
\address[T. Saanouni]{Department of Mathematics, College of Science, Qassim University, Buraydah, Kingdom of Saudi Arabia.}
\email{\sl\textcolor{blue}{t.saanouni@qu.edu.sa}}
\address[A. M. Alqaied]{Department of Mathematics, College of Science, Qassim University, Buraydah, Kingdom of Saudi Arabia.}
\email{\sl\color{blue}{46121523@qu.edu.sa}}
\subjclass[2020]{35Q55}
\subjclass[2010]{35Q55}
\keywords{Biharmonic Inhomogeneous Schr\"odinger equation, nonlinear equations, global existence, blow-up.}
\title[BINLS]{A note on the fourth-order Schr\"odinger equation with spatially growing inhomogeneous source term}
\date{\today}
\begin{document}
\begin{abstract}
This paper studies a non-linear biharmonic Sch\"odinger equation
with an unbounded inhomogeneous term. The main goal is to develop a local theory but also a global theory for small data, in the energy space. Moreover, we develop a local theory in Sobolev spaces with lower regularity. The challenge is to deal with the inhomogeneous unbounded term, which broke down the space translation invariance. In order to handle the inhomogenous term, we use some Strauss type estimates, which require a spherically symmetric assumption.
\end{abstract}
\hrule
\maketitle
\hrule
\vspace{ 1\baselineskip}
\renewcommand{\theequation}{\thesection.\arabic{equation}}
\section{Introduction}
It is the purpose of this note, to investigate the biharmonic inhomogeneous Schr\"odinger equation

\begin{equation}
\left\{
\begin{array}{ll}
i\partial_t v+\Delta^2  v=-\epsilon |x|^{b}|v|^{q-1}v ;\\
v_{|t=0}=v_0.
\label{S}
\end{array}
\right.
\end{equation}

Throughout this work, we employ the bi-Laplacian operator defined as $\Delta^2 := \Delta(\Delta)$, which applies the standard Laplacian operator twice in succession. The system's state is characterized by a complex-valued wave function $v$, dependent on the variables $(t, x) \in \mathbb{R} \times \mathbb{R}^N$, where $N \geq 1$ denotes the spatial dimension. The parameter $\epsilon = \pm 1$ distinguishes between two fundamental regimes: the defocusing case ($\epsilon = +1$) and the focusing case ($\epsilon = -1$). The nonlinearity in our model is represented by the inhomogeneous term $|\cdot|^b$, where the exponent $b > 0$ is a positive real parameter that governs the growth rate of the nonlinear interaction.

The conceptual foundation for this theoretical framework can be traced to the pioneering investigations of \cite{Karpman} and subsequently, \cite{Karpman 1}. Their work was fundamentally motivated by the need to move beyond the standard nonlinear Schr\"odinger equation to accurately model the propagation of high-intensity laser pulses through bulk media exhibiting a Kerr nonlinearity. They recognized that under such conditions, the conventional paraxial approximation was insufficient, as it neglected significant higher-order dispersive effects.

To address this physical limitation, they systematically introduced a fourth-order dispersion term into the governing wave equation, creating a more comprehensive model capable of describing novel nonlinear wave phenomena, including the stabilization of solitons against collapse. This extension provided crucial insights into beam propagation dynamics that were inaccessible within the conventional second-order theory.

In the context of laser-plasma interactions, the source term within this extended framework is interpreted as a nonlinear potential that is self-consistently generated by, and in turn influences, localized perturbations in the electron density. This interpretation of the source term's physical role aligns with and is supported by the earlier theoretical analyses conducted by \cite{brg}.
 
The Shr\"odinger problem \eqref{S} has the two next conserved quantities 

\begin{align}
 \int_{\R^N}|v(t,x)|^2\,dx &:=M(v(t)) = M(v_0);\label{mass}\tag{Mass}\\   
 \int_{\R^N}\Big(|\Delta v(t,x)|^2+\frac{2\epsilon}{1+q}|v(t,x)|^{1+q}|x|^{b}\Big)dx&:=E(v(t)) = E(v_0).\label{nrg}\tag{Energy}
\end{align}

The inhomogeneous Sobolev space $H^2_{rd}$, is said energy space because it allows to use the conservation laws with minimal regularity. If $v$ resolves the equation \eqref{S}, so does the family

\begin{align}
v_\lambda=\lambda^\frac{4+b}{q-1}v(\lambda^4\cdot,\lambda\cdot),\quad\lambda>0.\label{scl}    
\end{align}

The equality $\|v_\lambda\|_{\dot H^{s_c}}=\|v\|_{\dot H^{s_c}}$ gives the so-called critical index

\begin{align}
s_c:=\frac N2-\frac{4+b}{q-1}.    
\end{align}

The mass and $\dot H^s$-critical exponents for the Schr\"odinger equation \eqref{S} are

\begin{align}
    s_c=0\Leftrightarrow q_{m}:=1+\frac{8+2b}N,\quad s_c=s\Leftrightarrow q^{e}_s:=\left\{
\begin{array}{ll}
1+\frac{8+2b}{N-2s},\quad\mbox{if}\quad N>2s;\\
\infty,\quad\mbox{if}\quad  1\leq N\leq2s,
\end{array}
\right.\quad q^e:=q^e_2.
\end{align}

The next inhomogeneous fourth order nonlinear Schr\"odinger problem has attracted many attention last few years.

\begin{align}
    i\partial_t v+\Delta^2  v=\pm |x|^{-b}|v|^{q-1}v,\quad b>0.\label{S'}
\end{align}

The study of this problem was initiated by C.~M.~Guzmán and A.~Pastor~\cite{gp}, who established local well-posedness in $H^2$ for $N \ge 3$, $0 < b < \min\{\tfrac{N}{2}, 4\}$, and for parameters satisfying $\max\{0, \tfrac{2(1-b)}{N}\} < {q-1}$ and $(N-4)(q-1) < 8 - 2b$. Furthermore, they obtained global well-posedness results in the mass-subcritical and mass-critical regimes, i.e., when $\min\{\tfrac{2(1-b)}{N}, 0\} < {q-1} \le \tfrac{2(4-b)}{N}$, under some technical restrictions. Later, Cardoso, Guzmán, and Pastor~\cite{cgp} extended these results by proving local well-posedness in $\dot{H}^s \cap \dot{H}^2$ for $N \ge 5$, $0 < s < 2$, and $0 < b < \min\{\tfrac{N}{2}, 4\}$, under the condition $\max\{\tfrac{8 - 2b}{N}, 1\} < {q-1} < \tfrac{8 - 2b}{N-4}$. The restrictions about low-dimensional cases and a lower bound on the source term exponent were removed in \cite{lz} by use of some bilinear Strichartz-type estimates in Besov spaces. An energy global theory for small data was developed in the inter-critical regime in \cite{gp} and in the energy-critical regime in \cite{gp2}. In the case of the mass-critical nonlinear Schr\"odinger equation, considerable attention has been devoted to the study of solutions that fail to exist globally when the associated energy is negative, as initially analyzed in \cite{cow,Dinh,rbts}. Concerning the scattering behavior, significant progress has been achieved in the analysis of global solutions within the repulsive inter-critical regime. In particular, the first author demonstrated in \cite{st4} that, under the assumption of spherical symmetry of the initial data, global well-posedness and scattering hold. Subsequently, this symmetry restriction was successfully lifted in \cite{cg2}, thereby extending the scattering results to the general, non-radial focusing setting. See \cite{sg2} for the dichotomy of global existence versus finite time blow-up, under the ground state threshold. Moreover, the energy scattering in the defocusing regime was derived by the first author in \cite{st1}. In the energy-critical case, a local theory was developed by the first author in \cite{sg1,sp1}. Subsequently, in a series of works \cite{akr1,akr2,akr3,akr4}, An. et al. carried out a comprehensive analysis of the inhomogeneous biharmonic nonlinear Schr\"odinger equation given by \eqref{S}. In these studies, they rigorously proved both the local and global well-posedness of the problem, together with the standard continuous dependence of the solutions in the Sobolev space $H^{s}$. Their results hold for spatial dimensions $N \in \mathbb{N}$ and regularity indices satisfying $0 \leq s < \min\left\{\frac{2+N}{2}, \frac{3N}{2}\right\}$, under the additional parameter restrictions $0 < b < \min\left\{4,N, \frac{3N}{2}-s, \frac{N}{2}+2-s\right\}$ and $0 < q < q_s^e$. Furthermore, in the critical case $q=q_s^e$, it was demonstrated in these works that the Cauchy problem associated with equation \eqref{S} admits a locally well-posed formulation in the Sobolev space $H^{s}(\mathbb{R}^{N})$, provided certain parameter conditions are fulfilled. Specifically, the authors considered the case when $N \geq 3$, the regularity index satisfies $1 \leq s < \frac{N}{2}$, and the coefficient $b$ lies within the range $0 < b < \min\left\{4, 2 + \frac{N}{2} - s \right\}$, whenever $q$ is an even integer, or alternatively, when $q-1>\lceil s \rceil-2$\footnote{$\lceil \cdot \rceil$ denote the integer part.}.

It is the aim of this note, to develop a local theory, but also a global theory for small datum, about the inhomogeneous bi-harmonic Schr\"odinger problem \eqref{S} in the case of unbounded growing up inhomogeneous term, namely $b>0$. Indeed, in the energy space $H^2_{rd}$, we obtain the local existence and uniqueness of solutions in the energy sub-critical, but also the energy-critical regime with small data. The proof follows a standard fixed point argument coupled with Strichartz estimates, in Proposition \ref{prop2}, and some Strauss-type estimates about the decay of radial functions in Sobolev spaces, in Lemma \ref{sblv}. Then, we prove that this energy local solution extends globally and scatters, for small datum. In the case of non-finite energy, namely a data in $H^1_{rd}$, we develop a local theory. The main challenge is to deal with the inhomogeneous term, which fails to be in any Lebesque space. Compared with the case $b<0$, the situation is very different and the methods used fail in the case $b>0$. Indeed, the property $|x|^{-b}\in L^r(|x|<1)$ for $r<\frac Nb$ and $|x|^{-b}\in L^r(|x|>1)$ for $r>\frac Nb$ is false for $b\leq0$ but also the property $|x|^{-b}\in L^{\frac Nb,\infty}$ fails for $b\leq0$, and finally, the hardy type estimate $\||x|^{-\alpha}v\|_r\lesssim \||\nabla|^\alpha v\|_{L^r}$ is adapted to the case $b<0$. In this work, we use some Strauss type estimates in order to handle the term $|x|^b$ for $b>0$, which require some spherically symmetric assumption. To the authors knowledge, this work is the first one dealing with the inhomogeneous nonlinear Schr\"odinger problem \eqref{S} with $b>0$. 

For simplicity, we let the standard Lebesgue and Sobolev spaces and norms

\begin{align}
    L^r&:=L^r({\R^N}),\quad H^2:=H^2({\R^N}),\\
    \|\cdot\|_r&:=\|\cdot\|_{L^r},\quad\|\cdot\|:=\|\cdot\|_2,\quad \|\cdot\|_{H^2} := \Big(\|\cdot\|^2 + \|\Delta\cdot\|^2\Big)^\frac12.
\end{align}
Take also the radial Sobolev space $H^2_{rd}:=\{f\in H^2,\quad f(x)=f(|x|)\}$ and $T^{\max}>0$ be the lifespan of an eventual solution to \eqref{S}. Let us also define the real numbers

\begin{align}
q^e:=q^e_2,\quad D:=\frac{Nq-N-2b}4.
\end{align}

and the operator

\begin{align}
|\nabla|^s:=\big(-\Delta\big)^\frac s2,\quad \langle|\nabla|^s\rangle v:=\big(v,|\nabla|^\tau v\big),\quad |\langle|\nabla|^\tau\rangle v|&:=\big(|v|^2+||\nabla|^\tau v|^2\big)^\frac12.\label{op}
\end{align}

The next sub-section contains the contribution of this manuscript.
\subsection{Main results}

We start with a local well-posedness result about the inhomogeneous fourth-order Schr\"odinger problem \eqref{S}, in the energy space.

\begin{thm}\label{loc}
Let $N\geq5$, $\epsilon=\pm1$ and $b\geq0$. Then, for any $v_0\in H^2_{rd}$, there is $T>0$ and a local solution to \eqref{S} in the energy space $C\big([0,T],H^2_{rd}\big)$, whenever $1+\frac2N+\frac{2b}{N-1}\leq  q\leq q^e$, such that $\|v_0\|_{H^2}\ll1$ if $q=q^e$, and one of the following holds

\begin{align}
\left\{
\begin{array}{ll}
N&\geq6,\quad b\leq\frac{(N-1)(4+3N)}{3N};\\ 
   N&=5,\quad \frac29\leq b\leq\frac{38}{45}.
   \end{array}
\right.
\label{cnd1}
\end{align}
Moreover,

\begin{enumerate}
    \item  the mass and energy are conserved;
    \item the solution belongs to $L^p_T(W^{2,r}_{rd})$, for any $(p,r)\in\Gamma$, given in definition \ref{adm};
    \item the uniqueness follows in the energy space for $q<q^e$ or $q=q^e$ and small datum;
    \item the uniqueness follows in $L^p_T(W^{2,r})\cap C\big([0,T],H^2_{rd}\big)$;
    \item The solution is global, whenever $q<q_m$ or $q=q_m$ and $\|v_0\|\ll1$ or $\epsilon=1$ and $q<q^e$.
\end{enumerate}
\end{thm}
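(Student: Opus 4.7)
The plan is to construct the solution as a fixed point of the Duhamel operator
\[
\Phi(v)(t) := e^{it\Delta^2}v_0 - i\epsilon \int_0^t e^{i(t-s)\Delta^2}\bigl(|x|^b|v|^{q-1}v\bigr)(s)\,ds
\]
on a ball
\[
B_{T,M} := \Bigl\{v\in C([0,T],H^2_{rd})\cap L^p_T(W^{2,r}_{rd})\ :\ \|v\|_{L^\infty_T H^2}+\|v\|_{L^p_T W^{2,r}} \leq M\Bigr\},
\]
for a suitably chosen admissible pair $(p,r)\in\Gamma$, endowed with a weaker (lower-order) distance in which $B_{T,M}$ is complete. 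The Strichartz estimates of Proposition~\ref{prop2} bound the free part $e^{it\Delta^2}v_0$ by $C\|v_0\|_{H^2}$, so the problem reduces to a single nonlinear estimate of the Duhamel integral.

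The heart of the argument is to control $\||x|^b|v|^{q-1}v\|_{L^{p'}_T W^{2,r'}}$ in the dual Strichartz norm. Since $|x|^b$ is not in any Lebesgue space, a naive H\"older splitting fails and one must exploit the radial structure. I would decompose $\R^N=\{|x|\leq 1\}\cup\{|x|>1\}$: on the interior region $|x|^b\leq 1$ is harmless and a standard weighted inhomogeneous Sobolev--H\"older estimate applies, while on the exterior region the Strauss-type radial decay supplied by Lemma~\ref{sblv} converts a power of $|v|^{q-1}$ into polynomial decay in $|x|$ that absorbs $|x|^b$ precisely when the lower bound $q\geq 1+\tfrac{2}{N}+\tfrac{2b}{N-1}$ holds. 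The technical conditions in \eqref{cnd1} are exactly what is needed for the resulting Sobolev exponents to stay inside the admissible set $\Gamma$, both in the exceptional dimension $N=5$ and in $N\geq 6$. The output is an estimate of the form
\[
\|\,|x|^b|v|^{q-1}v\,\|_{L^{p'}_T W^{2,r'}} \lesssim T^\theta M^q,
\]
with $\theta>0$ in the sub-critical case $q<q^e$ and $\theta=0$ at criticality $q=q^e$ (which is what forces the smallness hypothesis on $v_0$). The matching difference bound, via the pointwise inequality $\bigl||v|^{q-1}v-|w|^{q-1}w\bigr|\lesssim (|v|^{q-1}+|w|^{q-1})|v-w|$, yields a contraction in the lower-order metric and hence a unique fixed point.

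Items (1)--(4) then follow by routine adaptations. Mass and energy conservation are obtained by approximating $v_0$ with smooth radial data, using the conservation laws for smoother solutions, and passing to the limit through continuous dependence. Uniqueness in the Strichartz class $L^p_T(W^{2,r})\cap C([0,T],H^2_{rd})$ is immediate from the contraction, while unconditional uniqueness in $C([0,T],H^2_{rd})$ for $q<q^e$ comes from a Gr\"onwall-type argument on the difference of two solutions, exploiting the extra factor $T^\theta$. Membership in $L^p_T(W^{2,r}_{rd})$ for every admissible pair $(p,r)\in\Gamma$ is recovered by plugging the solution back into the Duhamel formula and applying Proposition~\ref{prop2} with the desired pair on the left-hand side. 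For the globalization claim (5), one runs the blow-up alternative: in the defocusing case $\epsilon=+1$, $q<q^e$, the energy is coercive and, together with mass conservation, controls $\|v(t)\|_{H^2}$; in the focusing mass-sub-critical range $q<q_m$, a weighted Gagliardo--Nirenberg inequality (again leveraging Lemma~\ref{sblv} to absorb the $|x|^b$ weight) shows that the potential energy is strictly subcritical in $\|\Delta v\|$, so the energy alone bounds the $H^2$-norm; at the mass-critical threshold $q=q_m$ the same mechanism works provided $\|v_0\|\ll 1$, so that the sharp Gagliardo--Nirenberg constant can be absorbed.

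The main obstacle I expect is producing a single nonlinear estimate that is sharp enough to close at the critical endpoint $q=q^e$ \emph{and} compatible with the admissible structure $\Gamma$ in both low and high dimensions; this is precisely what the restrictive conditions \eqref{cnd1} reflect, and where the most care is needed in the joint choice of the Strichartz pair $(p,r)$ and of the decay exponent extracted from the Strauss estimate in Lemma~\ref{sblv}.
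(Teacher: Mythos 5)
Your overall architecture (Duhamel fixed point on a ball, Strichartz estimates, Strauss-type radial decay to absorb $|x|^b$, contraction in a lower-order metric) is the same as the paper's, but there is one genuine gap and one misattribution worth flagging.

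The gap: you propose to close the $H^2$-level estimate by bounding the nonlinearity in $L^{p'}_T W^{2,r'}$, i.e.\ by putting two full derivatives on $|x|^b|v|^{q-1}v$. The Leibniz expansion of $\Delta\big(|x|^b|v|^{q-1}v\big)$ produces the term $|x|^b|v|^{q-2}|\nabla v|^2$, which is only controllable pointwise when $q\geq2$ (or via a fractional chain rule, which you do not invoke and which carries its own exponent restrictions). The theorem, however, allows $q$ as small as $1+\frac2N+\frac{2b}{N-1}$, which is below $2$ for instance when $b$ is small and $N\geq6$. The paper avoids this entirely by using the regularity-gaining Strichartz estimate of Proposition \ref{prop2}, part 2, which requires only \emph{one} derivative of the source term, measured in $L^2_T(\dot W^{1,\frac{2N}{2+N}})$; the resulting pointwise bound $|\nabla(|x|^b|v|^{q-1}v)|\lesssim|x|^b|v|^{q-1}\big(|\nabla v|+|x|^{-1}|v|\big)$ is valid for all $q>1$, and the singular factor $|x|^{-1}$ is absorbed by the Hardy inequality \eqref{hrd}. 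Without this device (or a fractional chain rule argument) your scheme does not cover the stated range of $q$; this is also where the case analysis in the paper (four regimes of $q$) and part of the restrictions \eqref{cnd1} actually originate.

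The misattribution: you say the conditions in \eqref{cnd1} are what is needed for the Sobolev exponents to stay inside $\Gamma$. In fact, each single application of the Strauss estimate \eqref{frcs} with a fixed decay parameter $s$ yields local existence only for $q$ in an interval of the form $\big[1+\frac{2b}{N-2s}+\frac2N,\,1+\frac{2b}{N-2s}+\frac8{N-4}\big]$ (further split into sub-intervals when $N=5$). The paper runs the argument for both $s=\frac12$ and $s=2$ and takes the union of the resulting intervals; the bounds on $b$ in \eqref{cnd1} are precisely the conditions under which these intervals overlap so that the union is a single interval $\big[1+\frac2N+\frac{2b}{N-1},q^e\big]$ with no gap. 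Your interior/exterior decomposition of $\R^N$ is harmless but unnecessary: the Strauss bound \eqref{frcs} of Lemma \ref{sblv} is a global sup, so the identity $|x|^b|v|^{\frac{2b}{N-2s}}=\big(|x|^{\frac{N-2s}2}|v|\big)^{\frac{2b}{N-2s}}$ is used on all of $\R^N$ at once. The remaining items (conservation laws, the two uniqueness statements, and globalization via the weighted Gagliardo--Nirenberg inequality of Proposition \ref{gag2}) are treated essentially as you describe.
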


In view of the results stated in the above theorem, some comments are in order.

\begin{itemize}
\item[$\spadesuit$]
In the proof, we obtain a stronger result. Indeed, the local existence follows for 

\begin{align}
q&\in \bigcup_{s\in\{\frac12,2\}} \Big[1+\frac{2b}{5-2s}+\frac{2}{5}, 1+\frac{2b}{5-2s}+\frac{5}{3}\Big] \bigcup\Big[3+\frac{2b}{5-2s},9+\frac{2b}{N-2s}\Big],\quad N=5;\nonumber\\
 q&\in \bigcup_{s\in\{\frac12,2\}} \Big[1+\frac{2b}{N-2s}+\frac{2}{N}, 1+\frac{2b}{N-2s}+\frac{8}{N-4}\Big],\quad N\geq6.\nonumber
\end{align}

We choose to omit, in Theorem \ref{loc}, the reunion of intervals with gaps.
\item[$\spadesuit$]
There is a gap if we omit the bounds on $b$.
\item[$\spadesuit$]
Compared with the standard case $b=0$, there is a gap $q<1+\frac{2}{N}$.
\item[$\spadesuit$]
In a paper in progress, we consider the low space dimensions.
\item[$\spadesuit$]
The scattering of global solution is treated in a work in progress.
\end{itemize}

Second, we consider the local well-posedness of the inhomogeneous fourth-order Schr\"odinger problem \eqref{S}, with lower regularity.

\begin{thm}\label{loc2}
Let $N\geq3$, $\epsilon=\pm1$, $0\leq b\leq 4(N-1)$ and $1+\frac{2b}{N-1}\leq q\leq q_1^e$, such that $\|v_0\|_{H^1}\ll1$ if $q=q_1^e$. Then, for any $v_0\in H^1_{rd}$, there is $T>0$ and a local solution to \eqref{S} in the space $C\big([0,T],H^1_{rd}\big)$. Moreover,

\begin{enumerate}
    \item  the mass is conserved;
    \item the solution belongs to $L^p_T(W^{1,r}_{rd})$, for any $(p,r)\in\Gamma$, given in definition \ref{adm};
    \item the uniqueness follows in $C\big([0,T],H^1\big)$ for $q<q^e_1$ or $q=q^e_1$ and small datum;
    \item the uniqueness follows in $L^p_T(W^{1,r})\cap C\big([0,T],H^1_{rd}\big)$.
\end{enumerate}
\end{thm}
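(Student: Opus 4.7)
The proof will follow a contraction mapping argument applied to the Duhamel integral formulation
\[
\Phi(v)(t) = e^{it\Delta^2} v_0 + i\epsilon \int_0^t e^{i(t-s)\Delta^2} \bigl(|x|^b |v|^{q-1} v\bigr)(s)\, ds
\]
on a closed ball of a Banach space built out of $C\bigl([0,T], H^1_{rd}\bigr)$ together with a finite collection of admissible Strichartz spaces $L^p_T\bigl(W^{1,r}_{rd}\bigr)$ for $(p,r)\in\Gamma$. The free evolution is controlled by Proposition \ref{prop2}, so the core issue is to estimate the nonlinearity $F(v) := |x|^b |v|^{q-1} v$, together with its gradient, in a dual Strichartz norm $L^{\tilde p'}_T L^{\tilde r'}$.

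To cope with the unbounded weight $|x|^b$, my key idea is to split the spatial integration into the interior region $A := \{|x| \leq 1\}$ and the exterior region $B := \{|x| > 1\}$. On $A$, the weight is bounded by $1$, and the problematic factor $|\nabla(|x|^b)| \lesssim |x|^{b-1}$ is absorbed via a Hardy-type inequality (recall $b \geq 0$), so the estimate on $A$ is essentially the one from the homogeneous case. On $B$, I would invoke the radial Strauss-type bound of Lemma \ref{sblv}, namely $|v(x)| \lesssim |x|^{-(N-1)/2}\|v\|_{H^1}$ for $|x|>1$, from which
\[
|x|^b |v(x)|^{q-1} \lesssim |x|^{\, b - (N-1)(q-1)/2}\,\|v\|_{H^1}^{q-1}.
\]
The hypothesis $q \geq 1 + 2b/(N-1)$ is precisely what forces the exponent of $|x|$ to be nonpositive, so the weighted quantity is uniformly bounded on $B$ by a power of $\|v\|_{H^1}$. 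The derivative contribution $|x|^{b-1}$ on $B$ is even smaller than $|x|^b$ and is treated analogously. Placing the remaining factor $|v|$ (or $\nabla v$) in the appropriate Strichartz Hölder chain then yields a bound of the schematic form
\[
\|F(v)\|_{L^{\tilde p'}_T W^{1,\tilde r'}} \lesssim T^{\theta}\, \|v\|_{L^\infty_T H^1}^{q-1}\,\|v\|_{L^p_T W^{1,r}},
\]
with $\theta > 0$ in the subcritical range $q < q_1^e$ and $\theta = 0$ at $q = q_1^e$.

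The difference estimate for $F(v) - F(w)$ is derived identically, using the pointwise bound $\bigl| |v|^{q-1}v - |w|^{q-1}w \bigr| \lesssim (|v|^{q-1} + |w|^{q-1})|v-w|$ and the same interior/exterior dissection. Taking $T$ small in the subcritical range, or $\|v_0\|_{H^1} \ll 1$ in the critical one, makes $\Phi$ a contraction on a ball of radius $\sim \|e^{it\Delta^2}v_0\|_{L^p_T W^{1,r}}$ and produces the fixed point. The Strichartz regularity built into the construction space gives claim (2); mass conservation (claim (1)) follows from pairing \eqref{S} with $\bar v$ and taking imaginary parts, the weight $|x|^b$ being real; claims (3) and (4) on uniqueness follow from the same difference estimate applied to any two candidate solutions.

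The main obstacle I anticipate is the delicate balance of admissible Strichartz pairs and Strauss decay needed to absorb the weight $|x|^b$ exactly at the energy-critical threshold $q = q_1^e$: one must verify that the Hölder exponents emerging from the interior/exterior dissection remain in $\Gamma$ and that the power of the Strichartz norm on the right-hand side exceeds $1$, so that smallness of $\|v_0\|_{H^1}$ closes the contraction. The technical upper bound $b \leq 4(N-1)$ should arise naturally from the compatibility of the chosen admissible pair with the Sobolev embedding underlying the Strauss estimate on $B$.
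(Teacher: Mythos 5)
Your overall architecture (Duhamel formula, contraction in a ball of $C_TH^1_{rd}\cap L^p_TW^{1,r}$, Strichartz via Proposition \ref{prop2}, radial Strauss decay to tame $|x|^b$, Hardy for the $|x|^{b-1}$ term coming from $\nabla(|x|^b)$) matches the paper's. But the way you deploy the Strauss estimate leaves a genuine gap on the interior region. You bound $|x|^b\le 1$ on $A=\{|x|\le 1\}$ and claim the estimate there "is essentially the one from the homogeneous case." The homogeneous $H^1$ theory for the fourth-order equation only closes for $q\le 1+\tfrac{8}{N-2}$, whereas the theorem claims the range up to $q_1^e=1+\tfrac{8+2b}{N-2}$, which is strictly larger whenever $b>0$. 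Discarding the weight on $A$ throws away exactly the gain (the vanishing of $|x|^b$ at the origin) that makes these supercritical-looking exponents reachable; a power nonlinearity that is $\dot H^1$-supercritical remains supercritical when restricted to a ball, since the scaling obstruction is a local concentration phenomenon. Your exterior estimate is also over-greedy: absorbing the full factor $|x|^b|v|^{q-1}$ into $\|v\|_{H^1}^{q-1}$ via $|v(x)|\lesssim|x|^{-(N-1)/2}\|v\|_{H^1}$ leaves only a single factor of $v$ (or $\nabla v$) for the dual Strichartz norm; this can be salvaged with the pair $(\tilde p',\tilde r')=(1,2)$, but it is not how the endpoint $q=q_1^e$ is reached.

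The paper's fix is to use the Strauss estimate \emph{globally and only partially}: for $s\in\{\tfrac12,1\}$ it factors
\begin{align*}
|x|^{b}|v|^{q-1}=\Big(|x|^{\frac{N-2s}{2}}|v|\Big)^{\frac{2b}{N-2s}}\,|v|^{q-1-\frac{2b}{N-2s}}
\lesssim \|v\|_{H^s}^{\frac{2b}{N-2s}}\,|v|^{q-1-\frac{2b}{N-2s}},
\end{align*}
valid for all $x$ (including near the origin, where the left factor vanishes). This replaces the effective power $q$ by $q-\tfrac{2b}{N-2s}$, which for $s=1$ is $\le 1+\tfrac{8}{N-2}$ exactly when $q\le q_1^e$; the remaining factor then fits the standard H\"older/Strichartz chain with exponent $r=\tfrac{N(2+\lambda)}{N+\lambda}$, $\lambda=q-1-\tfrac{2b}{N-2s}\ge 0$. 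Taking the union over $s\in\{\tfrac12,1\}$ of the admissible intervals $[1+\tfrac{2b}{N-2s},\,1+\tfrac{2b}{N-2s}+\tfrac{8}{N-2}]$ yields the full range $[1+\tfrac{2b}{N-1},\,q_1^e]$ precisely when these intervals overlap, which is where the hypothesis $b\le 4(N-1)$ comes from -- not, as you guessed, from a compatibility condition in an exterior estimate. To repair your argument, replace the interior/exterior dissection by this partial pointwise absorption of the weight.
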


In light of the results presented in the preceding theorem, it is appropriate to make several remarks and clarifications concerning their interpretation and implications.
\begin{itemize}
\item[$\spadesuit$]
In the proof, we obtain a stronger result. Indeed, the local existence follows for 

\begin{align}
 q&\in \bigcup_{s\in\{\frac12,1\}} \Big[1+\frac{2b}{N-2s}, 1+\frac{2b}{N-2s}+\frac{8}{N-2}\Big],\quad N\geq 3.
\end{align}
We choose to omit, in Theorem \ref{loc2}, the reunion of intervals with gaps.
\item[$\spadesuit$]
If we omit the bound about $b$, some gaps appear in the range of $q$.
\item[$\spadesuit$]
Compared with Theorem \ref{loc}, there is no gap for lower value of the source term exponent.
\end{itemize}

Finally, we consider a small data theory in the energy space.  

\begin{thm}\label{glb}
Let $N\geq5$, $\epsilon=\pm1$ and $b\geq0$. Suppose that $q\geq 2+\frac{2b}{N-1}$ and $q_m< q<q^e$. Let a local solution to \eqref{S} in the energy space $v\in C\big([0,T],H^2_{rd}\big)$. Then, there is $\delta>0$, such that if $\|e^{i\cdot\Delta^2}v_0\|_{\Lambda^{s_\nu}}<\delta$, then $v$ is global and scatters in $H^2$. Moreover,

\begin{align}
    \|v\|_{\Lambda_{s_\nu}}&\leq 2 \|e^{i\cdot\Delta^2}v_0\|_{\Lambda_{s_\nu}},\quad        \|\langle\Delta\rangle v\|_{\Lambda}\leq 2 \|\langle\Delta\rangle e^{i\cdot\Delta^2}v_0\|_{\Lambda}\label{gl1},
\end{align}

where, we denote by $\nu:=q-1-\frac{2b}{N-1}$, $s_{{\nu}}:=\frac N2-\frac{4}{\nu}{>0}$.

\end{thm}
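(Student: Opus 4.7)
The plan is to run a continuity argument on the Duhamel representation
\begin{equation*}
v(t) = e^{it\Delta^{2}}v_{0} - i\epsilon\int_{0}^{t} e^{i(t-s)\Delta^{2}}\bigl(|x|^{b}|v|^{q-1}v\bigr)(s)\,ds,
\end{equation*}
combining the global Strichartz estimates of Proposition \ref{prop2} with the radial Strauss-type decay of Lemma \ref{sblv} to tame the growing weight $|x|^{b}$. Writing $A(T):=\|v\|_{\Lambda^{s_{c}}(0,T)}$ and $B(T):=\|\langle\Delta\rangle v\|_{\Lambda(0,T)}$, Strichartz yields
\begin{align*}
A(T) &\lesssim \|e^{i\cdot\Delta^{2}}v_{0}\|_{\Lambda^{s_{c}}} + \bigl\||x|^{b}|v|^{q-1}v\bigr\|_{\mathcal{N}^{s_{c}}(0,T)}, \\
B(T) &\lesssim \|\langle\Delta\rangle e^{i\cdot\Delta^{2}}v_{0}\|_{\Lambda} + \bigl\|\langle\Delta\rangle\bigl(|x|^{b}|v|^{q-1}v\bigr)\bigr\|_{\mathcal{N}(0,T)},
\end{align*}
where $\mathcal{N}^{s_{c}}$ and $\mathcal{N}$ denote the corresponding dual admissible norms.

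The crux is a master nonlinear estimate of the form
\begin{equation*}
\bigl\||x|^{b}|v|^{q-1}v\bigr\|_{\mathcal{N}^{s_{c}}} + \bigl\|\langle\Delta\rangle\bigl(|x|^{b}|v|^{q-1}v\bigr)\bigr\|_{\mathcal{N}} \lesssim \|v\|_{\Lambda^{s_{c}}}^{q-1}\,\|\langle\Delta\rangle v\|_{\Lambda}.
\end{equation*}
To prove it I would split the spatial integration into $\{|x|\leq 1\}$, where the weight is bounded and the estimate reduces to the homogeneous biharmonic analysis in the inter-critical window $q_{m}<q<q^{e}$, and $\{|x|\geq 1\}$, where Lemma \ref{sblv} yields a radial decay $|v(x)|\lesssim |x|^{-(N-2s)/2}\|v\|_{H^{s}}$ for appropriate $s$. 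Applying this decay to a judiciously chosen factor of $v$ compensates the growth $|x|^{b}$; the algebraic hypothesis $q\geq 2+\tfrac{2b}{N-1}$ is precisely the threshold that makes the residual exponent non-positive so the Hölder/Strichartz bookkeeping closes, with the remaining $q-1$ factors distributed between $\|v\|_{\Lambda^{s_{c}}}^{q-1}$ and the one derivative-bearing $v$ placed in $\|\langle\Delta\rangle v\|_{\Lambda}$. The $\langle\Delta\rangle$ piece is handled by Leibniz: when $\Delta$ hits $|x|^{b}$ it produces $|x|^{b-2}$, absorbed by an extra Strauss decay factor drawn from one more $v$.

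Given the master estimate, the standard continuity-in-$T$ argument closes: the smallness hypothesis $\|e^{i\cdot\Delta^{2}}v_{0}\|_{\Lambda^{s_{c}}}<\delta$, the bound $\|\langle\Delta\rangle e^{i\cdot\Delta^{2}}v_{0}\|_{\Lambda}\lesssim \|v_{0}\|_{H^{2}}$, and continuity of $T\mapsto (A(T),B(T))$ on $[0,T^{\max})$ yield $A(T)\leq 2C\delta$ and $B(T)\leq 2C\|\langle\Delta\rangle e^{i\cdot\Delta^{2}}v_{0}\|_{\Lambda}$ uniformly in $T<T^{\max}$, so the blow-up alternative embedded in Theorem \ref{loc} forces $T^{\max}=+\infty$ and \eqref{gl1} holds. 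Scattering in $H^{2}$ then follows in the usual way: applying Strichartz and the master estimate to
\begin{equation*}
e^{-it_{1}\Delta^{2}}v(t_{1})-e^{-it_{2}\Delta^{2}}v(t_{2})=-i\epsilon\int_{t_{1}}^{t_{2}}e^{-is\Delta^{2}}\bigl(|x|^{b}|v|^{q-1}v\bigr)(s)\,ds,
\end{equation*}
and exploiting $\|v\|_{\Lambda^{s_{c}}(t_{1},\infty)}\to 0$ as $t_{1}\to\infty$ (a consequence of the global finiteness of $A(\infty)$) shows that $\{e^{-it\Delta^{2}}v(t)\}$ is Cauchy in $H^{2}$.

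The principal obstacle is the master nonlinear estimate. In contrast with $b<0$, where the weight lies in a Lorentz space and Hardy-type inequalities are available, for $b>0$ the weight grows unboundedly and lies in no Lebesgue space; pushing the estimate through depends entirely on the Strauss radial decay, and the delicate part is the Hölder/Strichartz bookkeeping that matches the $\mathcal{N}^{s_{c}}$ and $\mathcal{N}$ admissibility conditions to the gain extracted from the far-field decay, together with the extra care required when two derivatives act on the product $|x|^{b}|v|^{q-1}v$.
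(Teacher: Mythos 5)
Your proposal follows essentially the same route as the paper: Strichartz estimates at the critical and energy levels, the radial Strauss decay of Lemma \ref{sblv} to absorb the growing weight $|x|^{b}$, a small-data closing argument, and the Cauchy criterion for $e^{-it\Delta^{2}}v(t)$ to get scattering. The two presentational differences (a near/far splitting $\{|x|\le 1\}\cup\{|x|\ge 1\}$ versus the paper's global identity $|x|^{b}|v|^{q-1}=\bigl(|x|^{\frac{N-1}{2}}|v|\bigr)^{\frac{2b}{N-1}}|v|^{q-1-\frac{2b}{N-1}}$, and a continuity argument versus a contraction on the set $\mathcal{X}$ of \eqref{4.10}) are immaterial. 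One correction to your master estimate, though: as written, with the full power $\|v\|_{\Lambda^{s_c}}^{q-1}$ on the right, it cannot be produced by the Strauss mechanism, because the factor $\sup_{x}|x|^{\frac{N-1}{2}}|v(x)|$ is controlled by \eqref{frcs} through $\|v\|_{L^\infty_t(H^2)}$, not through any spacetime Strichartz norm; the correct form, as in Lemma \ref{nlest}, carries the prefactor $\|v\|_{L^\infty(H^2)}^{\frac{2b}{N-1}}$ and only $q-1-\frac{2b}{N-1}$ powers of the critical norm. This does not break your bootstrap, since $\|v\|_{L^\infty_t(H^2)}$ is controlled by the quantity $B(T)$ you already propagate (and it is exactly why the resulting $\delta$ depends on $\|v_0\|_{H^2}$), but the exponent bookkeeping in the H\"older/admissibility step must be redone with $q-1-\frac{2b}{N-1}$ factors, and the terms $|x|^{-2}|v|$, $|x|^{-1}|\nabla v|$ produced by the Leibniz rule are most cleanly handled by the Hardy inequality \eqref{hrd} (as in the paper) rather than by a further Strauss factor.
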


The results stated in the above theorem prompt the following comments.

\begin{itemize}
\item[$\spadesuit$]
Using Strichartz estimates and Sobolev embeddings, we have $\| e^{i\cdot\Delta^2}v_0\|_{\Lambda^{s_\nu}}\lesssim \|v_0\|_{H^2}$. Hence, the previous result applies particularly when $\|v_0\|_{H^2}\ll1$.
\item[$\spadesuit$]
The condition $2+\frac{2b}{N-1}>q^e$ gives the restriction $b\geq\frac{(N-1)(N-12)}{6}$, which is obvious for $N\leq 12$.
\item[$\spadesuit$]
If $N\geq8$, then $\max\{q_m,2+\frac{2b}{N-1}\}=2+\frac{2b}{N-1}$, so the range should be $ 2+\frac{2b}{N-1}\leq q<q^e$.
\end{itemize}

The next sub-section contains some standard tools.
\subsection{Useful estimates}

 We start with the next Hardy estimate \cite[Theorem 0.1]{majb}.

\begin{lem}
    Let $1<r<\infty$, $0<s<\frac Nr$ and $u\in \dot W^{s,r}(\R^N)$, then
    \begin{align}
        \||\cdot|^{-s}u\|_r\leq C_{N,s,r}\||\nabla|^su\|_{r}.\label{hrd}
    \end{align}
\end{lem}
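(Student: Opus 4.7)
The plan is to reduce the fractional Hardy inequality to an $L^r$-boundedness statement for a positive integral operator with a kernel homogeneous of degree $-N$, and then invoke Schur's test with power weights.

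First, I would invert $|\nabla|^s$ via the Riesz potential. Setting $f=|\nabla|^s u$, one recovers $u=I_s f$ with $I_s f(x)=\gamma_{N,s}\int_{\R^N}|x-y|^{-(N-s)}f(y)\,dy$; this identity is first proven on Schwartz functions and then extended to $\dot W^{s,r}$ by density, using $0<s<N$. The inequality to be proven becomes
\begin{equation*}
\Big\||x|^{-s}I_s f\Big\|_r \leq C \|f\|_r .
\end{equation*}
Viewing $Tf(x):=|x|^{-s}I_s f(x)$ as an integral operator with non-negative kernel $K(x,y)=\gamma_{N,s}|x|^{-s}|x-y|^{-(N-s)}$ satisfying $K(\lambda x,\lambda y)=\lambda^{-N}K(x,y)$, the natural tool is Schur's test with power weights $h_1(x)=h_2(x)=|x|^{-a}$, for which $L^r$-boundedness follows from
\begin{equation*}
\int_{\R^N}K(x,y)\,|y|^{-ar'}\,dy\leq C_1|x|^{-ar'},\qquad \int_{\R^N}K(x,y)\,|x|^{-ar}\,dx\leq C_2|y|^{-ar}.
\end{equation*}

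Second, the two integrals are evaluated by the classical Riesz composition identity
\begin{equation*}
\int_{\R^N}|x-y|^{-\alpha}|y|^{-\beta}\,dy=C_{\alpha,\beta,N}\,|x|^{N-\alpha-\beta},\qquad 0<\alpha,\beta<N,\ \alpha+\beta>N.
\end{equation*}
Applying it with $\alpha=N-s$ and $\beta\in\{ar',\,s+ar\}$, both Schur conditions hold precisely when
\begin{equation*}
\frac{s}{r'}<a<\min\Big\{\frac{N}{r'},\,\frac{N-s}{r}\Big\}.
\end{equation*}
The interval is non-empty: the non-trivial constraint $s/r'<(N-s)/r$ rewrites as $sr+sr'<Nr'$, i.e.\ $s<N/r$, which is exactly the hypothesis. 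Any admissible $a$ then yields finite $C_1,C_2$ and hence $\|T\|_{L^r\to L^r}\lesssim 1$, as desired.

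The main obstacle I expect is not the Schur computation, which is routine, but rather identifying the sharp range of the Schur exponent $a$: the lower bound $s>0$ and the upper bound $s<N/r$ enter precisely to keep the above interval open, so both endpoint hypotheses are genuinely used. A cleaner alternative (for $r=2$) would diagonalize $|x|^{-s}(-\Delta)^{-s/2}$ by the Mellin transform along radial directions, recovering Herbst's sharp constant; for general $1<r<\infty$ the Schur route is more convenient and self-contained.
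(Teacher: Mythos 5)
Your argument is correct, but it is worth noting that the paper does not prove this lemma at all: it simply imports it as \cite[Theorem 0.1]{majb}, so there is no internal proof to compare against. Your route is the classical self-contained one. The reduction $u=I_sf$ with $f=|\nabla|^su$ is legitimate in the stated range, since for $0<s<N/r$ the homogeneous space $\dot W^{s,r}$ is realized as $I_s(L^r)$ and the Riesz potential of an $L^r$ function is finite a.e.; the kernel $K(x,y)=\gamma_{N,s}|x|^{-s}|x-y|^{-(N-s)}$ is indeed $(-N)$-homogeneous, and the weighted Schur test with $h(x)=|x|^{-a}$ combined with the composition identity $\int|x-y|^{-\alpha}|y|^{-\beta}\,dy=C|x|^{N-\alpha-\beta}$ (valid for $0<\alpha,\beta<N$, $\alpha+\beta>N$) gives exactly the window $s/r'<a<\min\{N/r',(N-s)/r\}$, which is non-empty precisely because $r+r'=rr'$ turns $s/r'<(N-s)/r$ into $s<N/r$. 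So both hypotheses $s>0$ and $s<N/r$ are used where you say they are, and any admissible $a$ yields the bound. What your approach buys is a transparent, quantitative proof showing the necessity of the endpoint restrictions; what the citation buys the authors is brevity and, in the cited reference, a statement formulated directly for the fractional Laplacian without passing through the identification $\dot W^{s,r}=I_s(L^r)$. If you wanted to tighten your write-up, the only step deserving an explicit sentence is the density/inversion step, i.e., why $u=I_s(|\nabla|^su)$ pointwise a.e.\ for general $u\in\dot W^{s,r}$ rather than only for Schwartz functions; everything else is routine.
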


Sobolev injections \cite{co} will be useful.
\begin{lem}\label{sblv}
For $N\geq2$, hold
\begin{enumerate}
\item[1.]
$H^2\hookrightarrow L^q$ for every $q\in[2,\frac{2N}{N-4}]$ if $N\geq5$ and every $2\leq q<\infty$ if $N\leq4$;
\item[2.]
 $H^2_{rd} \hookrightarrow\hookrightarrow L^q$ is compact for every $q\in(2,\frac{2N}{N-4})$ if $N\geq5$ and every $2<q<\infty$ if $N\leq4$;
\item[3.]
for any $u\in H^1_{rd}$ and any $\frac12\leq s<1$, holds

\begin{equation}\label{frcs}
\sup_{x\in\R^N}|x|^{\frac{N-2s}2}|u(x)|\leq C_{N,s}\|u\|^{1-s}\|\nabla u\|^s.\end{equation}
\item [4.]
for any $\frac12< s<\frac N2$ and any $u\in\dot H^s_{rd}$, holds

\begin{equation}\label{frcs'}
\sup_{x\in\R^N}|x|^{\frac{N-2s}2}|u(x)|\leq C_{N,s}\||\nabla|^s u\|.\end{equation}
\end{enumerate}
\end{lem}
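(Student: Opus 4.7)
The plan splits into the Sobolev embeddings in parts (1)--(2) and the weighted pointwise bounds in parts (3)--(4).

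For part (1), I would invoke the classical Sobolev embedding $H^2(\R^N) \hookrightarrow L^{2N/(N-4)}$ for $N \geq 5$ (and $H^2 \hookrightarrow L^q$ for every finite $q$ when $N \leq 4$), and fill the intermediate exponents by Hölder interpolation against $L^2$. For the compact embedding (2), I would combine Rellich--Kondrachov on each ball $B_R$, which gives compactness of $H^2(B_R) \hookrightarrow L^q(B_R)$, with uniform tightness of the $L^q$-mass at infinity. The tightness is supplied by part (3) applied at $s=1/2$: any bounded sequence $(u_n) \subset H^2_{rd}$ satisfies $|x|^{(N-1)/2}|u_n(x)| \lesssim 1$ uniformly in $n$, so $\int_{|x|>R} |u_n|^q\,dx \to 0$ as $R \to \infty$, uniformly in $n$, for every $q>2$. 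A diagonal extraction then yields a strongly $L^q$-convergent subsequence.

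For part (3), I set $u(x) = \tilde u(|x|)$ and start from the fundamental theorem of calculus identity
\[
|\tilde u(r)|^2 = -2\,\mathrm{Re}\int_r^\infty \tilde u(\rho)\,\overline{\tilde u'(\rho)}\,d\rho.
\]
Multiplying by $r^{N-1}$, bounding $r^{N-1} \leq \rho^{N-1}$ inside the integral, and applying Cauchy--Schwarz recovers the classical Strauss endpoint at $s=1/2$, namely $r^{N-1}|\tilde u(r)|^2 \leq C_N\|u\|\|\nabla u\|$. For intermediate $s \in (1/2,1)$, I apply the same identity to $r^{N-2s}|\tilde u(r)|^2$: the derivative of the weight $\rho^{N-2s}$ contributes with the good sign $-(N-2s)\rho^{N-2s-1}|\tilde u|^2$, while the cross term $2\int_r^\infty \rho^{N-2s}|\tilde u||\tilde u'|\,d\rho$ is estimated by weighted Cauchy--Schwarz combined with the Hardy inequality \eqref{hrd}, which yields $\||\cdot|^{-(2s-1)} u\| \lesssim \||\nabla|^{2s-1} u\|$; interpolation of $\dot H^{2s-1}$ between $L^2$ and $\dot H^1$ then gives $\||\nabla|^{2s-1} u\| \lesssim \|u\|^{2-2s}\|\nabla u\|^{2s-1}$. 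Combining the pieces produces $r^{N-2s}|\tilde u(r)|^2 \lesssim \|u\|^{2(1-s)}\|\nabla u\|^{2s}$, which is the claim.

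For part (4), the FTC machinery is unavailable at non-integer $s$. The plan is to use a Littlewood--Paley decomposition $u = \sum_j P_j u$: for each dyadic piece at frequency $\sim 2^j$, combining Bernstein's inequality with the $s=1/2$ endpoint of (3) yields the pointwise block estimate $|x|^{(N-2s)/2}|P_j u(x)| \lesssim 2^{js}\|P_j u\|_2$, where one distinguishes the regimes $|x| \leq 2^{-j}$ (Bernstein $L^2 \to L^\infty$) and $|x| > 2^{-j}$ (rescaled Strauss). The triangle inequality then produces the claim with the $\dot B^s_{2,1}$ norm on the right; the upgrade to $\dot H^s = \dot B^s_{2,2}$ uses the radial Littlewood--Paley improvement from \cite{co}, for which $s > 1/2$ is precisely the threshold ensuring the absolute summability of the blocks.

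\textbf{Main obstacle.} Parts (1), (2) and the $s=1/2$ endpoint of (3) are textbook; the intermediate range of (3) reduces to a routine Hardy plus interpolation exercise. The genuinely delicate step is part (4) at non-integer $s$, which requires the radial improvement in the Littlewood--Paley decomposition in order to pass from the $\ell^1$-Besov norm produced by the triangle inequality to the $\ell^2$-Besov norm coinciding with $\dot H^s$; the threshold $s > 1/2$ is intrinsic to this gain.
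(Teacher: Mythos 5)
The paper does not actually prove Lemma \ref{sblv}: all four parts are imported from Cho and Ozawa \cite{co} (part 1 being the classical Sobolev embedding and part 2 the Strauss-type compactness), so there is no internal argument to compare yours against — your proposal supplies a proof where the paper supplies a citation. Your sketch is essentially correct. Parts (1)--(2) and the $s=\tfrac12$ endpoint of (3) are the standard arguments, and your treatment of $\tfrac12<s<1$ in (3) — differentiating $\rho^{N-2s}|\tilde u(\rho)|^2$, discarding the term of favourable sign, and closing with weighted Cauchy--Schwarz, the Hardy bound \eqref{hrd} at order $2s-1$, and $L^2$--$\dot H^1$ interpolation — is sound, since $0<2s-1<\tfrac N2$ throughout that range. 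The one loose step is the end of part (4): the uniform block bound $|x|^{(N-2s)/2}|P_ju(x)|\lesssim 2^{js}\|P_ju\|_{2}$ by itself only yields the $\dot B^s_{2,1}$ norm, and the upgrade to $\dot H^s$ is not a black-box ``radial Littlewood--Paley improvement''. Rather, for $|x|\sim 2^{-j_0}$ your two regimes actually give more: the Bernstein bound carries an extra factor $2^{(j-j_0)(N-2s)/2}$ for $j\le j_0$ and the rescaled Strauss bound an extra factor $2^{(j_0-j)(2s-1)/2}$ for $j>j_0$, both geometrically small away from $j=j_0$ precisely because $\tfrac12<s<\tfrac N2$; Cauchy--Schwarz against these decaying weights converts the $\ell^1_j$ sum into $\bigl(\sum_j 2^{2js}\|P_ju\|_{2}^2\bigr)^{1/2}\simeq\||\nabla|^su\|$. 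With that refinement your argument is complete and self-contained, which is more than the paper offers.
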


\begin{defi}\label{adm}
Letting $N\geq1$ and ${s}<2$, a pair $(q,r)$ is said ${s}$- admissible if 
\begin{align}
    \frac4q+{s}=N\Big(\frac12-\frac1r\Big);\label{dm1}
\end{align}

and

\begin{align}\label{dm2}
   \left\{
\begin{array}{ll}
\frac{2N}{N-2{s}}\leq r<\frac{2N}{N-4},\quad\mbox{if}\quad N\geq5;\\
2\leq r<\infty,\quad\mbox{if}\quad 1\leq N\leq4.
\end{array}
\right.
\end{align}

 Let $\Gamma_{s}$ be the set of ${s}$-admissible couples and $\Gamma:=\Gamma_0$. Let also the Stichartz spaces and norms

 \begin{align}
     \Lambda_{s}(I):=\bigcap_{(q,r)\in\Gamma_{s}}L^q(I,L^r),\quad \|\cdot\|_{\Lambda_{s}(I)}:=\sup_{(q,r)\in\Gamma_{s}}\|\cdot\|_{L^q(I,L^r)},\quad\Lambda:=\Lambda_0.
 \end{align}
  
\end{defi}

Recall the so-called Strichartz estimates \cite{bp,guo}.

\begin{prop}\label{prop2}
Let $N \geq 1$, $t_0\in I\subset \R$, $0\leq {s}<2$ and $(q,r)\in\Gamma_{s},(\tilde q,\tilde r)\in\Gamma_{-{s}}$, then,
\begin{enumerate}
\item[1.]
$\|v\|_{L^q(I,L^r)}\lesssim\|u(t_0)\|_{\dot H^{s}}+\|i\partial_t v+\Delta^2 v\|_{L^{\tilde q'}(I,L^{\tilde r'})}$;
\item[2.]
$\|\Delta v\|_{L^q(I,L^r)}\lesssim\|\Delta u(t_0)\|+\|i\partial_t v+\Delta^2 v\|_{L^2(I,\dot W^{1,\frac{2N}{2+N}})}, \quad\forall N\geq3$.
\end{enumerate}
\end{prop}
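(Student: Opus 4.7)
The plan is a standard small-data global contraction argument, adapted to the inter-critical regime in which the governing size is $\|e^{i\cdot\Delta^2}v_0\|_{\Lambda_{s_c}}$ of the free evolution. I would work with the Duhamel map
$$\Phi(v)(t) := e^{it\Delta^2}v_0 - i\epsilon\int_0^t e^{i(t-\tau)\Delta^2}\bigl[|x|^{b}|v|^{q-1}v\bigr](\tau)\,d\tau,$$
on the whole line $t\in\R$, and set $A:=\|e^{i\cdot\Delta^2}v_0\|_{\Lambda_{s_c}}$, $B:=\|\langle\Delta\rangle e^{i\cdot\Delta^2}v_0\|_{\Lambda}\lesssim \|v_0\|_{H^2}$. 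The fixed-point set is the ball
$$\mathcal B := \bigl\{v\in L^\infty(\R,H^2_{rd}) : \|v\|_{\Lambda_{s_c}}\le 2A,\ \|\langle\Delta\rangle v\|_{\Lambda}\le 2B\bigr\},$$
equipped with the distance $d(u,v):=\|u-v\|_{\Lambda_{s_c}}+\|\langle\Delta\rangle(u-v)\|_{\Lambda}$. By Proposition \ref{prop2}, running Strichartz at the $\dot H^{s_c}$ level for the first bound and at the energy level (pairing with $L^2(\dot W^{1,2N/(N+2)})$) for the second reduces matters to a nonlinear estimate of the form
$$\bigl\| |x|^{b}|v|^{q-1}v\bigr\|_{\mathcal N_{s_c}(\R)} + \bigl\|\langle\Delta\rangle\bigl(|x|^{b}|v|^{q-1}v\bigr)\bigr\|_{L^2(\R,\dot W^{1,\frac{2N}{N+2}})} \lesssim \|v\|_{\Lambda_{s_c}}^{q-1}\|\langle\Delta\rangle v\|_{\Lambda},$$
in a dual Strichartz space $\mathcal N_{s_c}$ adapted to a $\Gamma_{-s_c}$ pair, together with its Lipschitz counterpart.

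The core difficulty is the growth factor $|x|^{b}$. I would decompose $|x|^{b}=|x|^{b}\chi_{\{|x|\le 1\}}+|x|^{b}\chi_{\{|x|>1\}}$. On the unit ball the weight is bounded and the estimate reduces to the standard intercritical calculation. On the complement, I would invoke the radial Strauss estimate \eqref{frcs'} from Lemma \ref{sblv}: for $s\in(1/2,N/2)$ and $v\in \dot H^s_{rd}$, one has $|v(x)|\le C_{N,s}|x|^{-(N-2s)/2}\||\nabla|^s v\|$, applied with $s=s_c$ (or with $s=2$ together with a fractional chain rule when the derivative lands on the nonlinearity). Extracting this pointwise decay from $(q-1)$ factors of $|v|$ produces a weight $|x|^{-(N-2s_c)(q-1)/2}$ that must overcome $|x|^{b}$ at infinity; a direct arithmetic check shows that the hypothesis $q\ge 2+\frac{2b}{N-1}$ is precisely the threshold which, combined with the use of the critical $s_c$, leaves a margin $|x|^{-\eta}$ integrable in the relevant mixed Lebesgue norm, after the last factor of $v$ is placed in a Strichartz pair via Hölder in space and interpolation in time between $\Lambda_{s_c}$ and $\Lambda$.

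Once this bilinear/trilinear bound and its difference version are secured, standard algebra on $\mathcal B$ shows that the nonlinear contribution to $\|\Phi(v)\|_{\Lambda_{s_c}}$ and to $\|\langle\Delta\rangle\Phi(v)\|_{\Lambda}$ carries a factor $A^{q-1}$, so for $A<\delta$ small enough $\Phi$ sends $\mathcal B$ into itself and is a strict contraction with respect to $d$. The Banach fixed point coincides with the local solution of Theorem \ref{loc} by uniqueness; this yields $T^{\max}=+\infty$ and the quantitative bounds \eqref{gl1}. Scattering in $H^2$ is then a Cauchy tail estimate: for $t<t'$,
$$\|e^{-it'\Delta^2}v(t')-e^{-it\Delta^2}v(t)\|_{H^2}\lesssim \bigl\|\langle\Delta\rangle\bigl(|x|^{b}|v|^{q-1}v\bigr)\bigr\|_{L^2([t,t'],\dot W^{1,\frac{2N}{N+2}})}\longrightarrow 0$$
as $t,t'\to+\infty$ by the global finiteness of the Strichartz norms of $v$, which defines $v_+\in H^2_{rd}$ with $\|v(t)-e^{it\Delta^2}v_+\|_{H^2}\to 0$, and symmetrically for $t\to-\infty$.

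The main obstacle I anticipate is the precise exponent bookkeeping in the nonlinear estimate under the weight $|x|^{b}$: selecting the Strauss exponent $s$, the Hölder partition of Lebesgue exponents, and an admissible dual pair so that all space-time integrabilities balance, with the hypothesis $q\ge 2+\frac{2b}{N-1}$ saturated exactly so that $|x|^{b}$ is absorbed at infinity without losing smallness at the origin or at the critical level. The restriction $q_m<q<q^e$ guarantees $0<s_c<2$, hence both $s_c$-admissible pairs exist in $\Gamma_{s_c}$ (Definition \ref{adm}) and the Strauss estimate is applicable; the upper bound $q<q^e$ prevents the endpoint loss that would otherwise obstruct the contraction.
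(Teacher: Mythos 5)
Your proposal does not prove the statement in question. Proposition \ref{prop2} is the \emph{linear} Strichartz estimate for the fourth-order propagator $e^{it\Delta^2}$: a space-time bound on the free and Duhamel parts of a solution of the linear inhomogeneous equation $i\partial_t v+\Delta^2 v=F$, valid for general $s$-admissible pairs, with the second item expressing a one-derivative gain. What you have written is instead a sketch of the small-data global well-posedness and scattering result (Theorem \ref{glb}), and you explicitly invoke Proposition \ref{prop2} as a tool in the course of your argument, so as a proof of that proposition your text is circular. Nothing in it addresses the actual content of the statement: the dispersive decay $\|e^{it\Delta^2}f\|_{L^\infty}\lesssim |t|^{-N/4}\|f\|_{L^1}$ for the biharmonic kernel, the $TT^*$ argument combined with the Hardy--Littlewood--Sobolev inequality in the time variable, the Sobolev embedding needed to pass from $L^2$-admissible to $s$-admissible pairs (which is where the condition \eqref{dm2} enters), the Christ--Kiselev lemma for the retarded estimate with non-dual exponent pairs, or the local smoothing/Kenig--Ponce--Vega type estimate underlying the regularity gain in item 2.

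For completeness: the paper itself does not prove Proposition \ref{prop2}; it is recalled as a known result with citations to Pausader and to Guo. So the expected answer was either that citation or a reproduction of the standard dispersive-plus-$TT^*$ argument for $e^{it\Delta^2}$. Your sketch of the global theory is broadly consistent with the strategy actually used in Section \ref{sec4} (contraction in a ball controlled by the $\Lambda_{s_c}$ and $\Lambda$ norms, Strauss-type radial decay to absorb $|x|^{b}$, and a Cauchy tail estimate for scattering), but it is a proof of a different statement and cannot be accepted for this one.
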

\begin{rem}
The second statement in the above result reveals a gain in regularity for the corresponding solutions.
\end{rem}

The next Gagliardo-Nirenberg type inequality will be useful.

\begin{prop}\label{gag2}
Let $N\geq1$, $b>0$ and $1+\frac{2b}{N-1}< q< q^e$. Thus, there exists a sharp constant $C_{N,q,b}>0$, such that for all $v\in H^2_{rd}$,
\begin{align}
    \int_{\R^N}|v|^{1+q}|x|^{b}\,dx\leq C_{N,q,b}\|v\|^{1+q-D}\|\Delta v\|^{D}.\label{gg}
\end{align}

\end{prop}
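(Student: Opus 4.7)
The plan is to combine a Strauss-type radial decay estimate with a fractional Sobolev interpolation, with the decay exponent chosen so that the weight $|x|^b$ is absorbed exactly; the sharp constant then emerges from a standard variational argument.

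First I would set $s := \frac{N}{2}-\frac{b}{q-1}$. The hypothesis $q>1+\frac{2b}{N-1}$ forces $s>\frac{1}{2}$, and $b>0$ together with $q>1$ gives $s<\frac{N}{2}$, so Lemma~\ref{sblv} part 4 yields, for any radial $v$,
\[
|v(x)|\leq C_{N,s}\,|x|^{-(N-2s)/2}\,\||\nabla|^s v\|.
\]
The standard interpolation $\||\nabla|^s v\|\leq\|v\|^{1-s/2}\|\Delta v\|^{s/2}$, valid for $s\in[0,2]$, then replaces the right-hand side by a pointwise bound purely in the $H^2$ data.

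Next I would raise this estimate to the power $q-1$ and multiply by $|x|^b$. The choice of $s$ makes the resulting $|x|$-exponent vanish identically:
\[
b-\tfrac{(N-2s)(q-1)}{2}=b-\tfrac{2b}{q-1}\cdot\tfrac{q-1}{2}=0.
\]
Decomposing $|v|^{1+q}|x|^b=|v|^2\cdot\bigl(|v|^{q-1}|x|^b\bigr)$ and integrating, the $L^2$ factor produces
\[
\int_{\R^N}|v|^{1+q}|x|^b\,dx \leq C\,\|v\|^2\bigl(\|v\|^{1-s/2}\|\Delta v\|^{s/2}\bigr)^{q-1},
\]
and a direct algebraic check shows $(q-1)s/2=D$ and $2+(q-1)(1-s/2)=1+q-D$, recovering \eqref{gg} with a concrete (though non-sharp) constant.

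For the sharp constant I would define $C_{N,q,b}^{-1}$ as the infimum of the Weinstein quotient $\|v\|^{1+q-D}\|\Delta v\|^D/\int|v|^{1+q}|x|^b\,dx$ over nonzero $v\in H^2_{rd}$, use the scaling \eqref{scl} to normalize a minimizing sequence, and invoke the compact radial Sobolev embedding of Lemma~\ref{sblv} part 2 combined with a Brezis--Lieb type splitting of the weighted nonlinearity to extract a convergent subsequence and exhibit a minimizer.

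The main technical obstacle is ensuring $s\leq 2$, required for the Sobolev interpolation step. This is automatic when $N\leq 4$, but for $N\geq 5$ the unified argument above is constrained to $q\leq 1+\tfrac{2b}{N-4}$. To cover the remaining upper range $1+\tfrac{2b}{N-4}<q<q^e$ one must split the integration at $|x|=1$, pairing the Strauss bound with a smaller $s$ on the exterior (where the radial decay still dominates $|x|^b$) with a direct Sobolev-embedding bound on the interior (where $|x|^b\leq 1$ is harmless), and reconciling the two partial estimates using the homogeneity of the target inequality.
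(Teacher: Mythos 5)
Your core computation is correct and belongs to the same family of arguments as the paper's, but it is organized differently. The paper fixes the Strauss exponent at $s=\tfrac12$, writes $|x|^b|v|^{1+q}=\big(|x|^{\frac{N-1}{2}}|v|\big)^{\frac{2b}{N-1}}|v|^{1+q-\frac{2b}{N-1}}$, bounds the first factor by \eqref{frcs}, applies the unweighted Gagliardo--Nirenberg inequality to $\int|v|^{1+q-\frac{2b}{N-1}}$, and converts the leftover $\|\nabla v\|$ via \eqref{ntr}; the hypothesis $q>1+\frac{2b}{N-1}$ enters there exactly as your condition $s>\tfrac12$ does here. You instead tune $s=\frac N2-\frac{b}{q-1}$ in \eqref{frcs'} so that $|v|^{q-1}|x|^b$ is bounded pointwise by a constant multiple of $\||\nabla|^sv\|^{q-1}$ with no residual weight, leaving only a clean $\|v\|^2$ and one Fourier-side interpolation; your exponent checks $(q-1)s/2=D$ and $2+(q-1)(1-s/2)=1+q-D$ are correct. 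What each buys: your version is shorter and makes the role of the lower bound on $q$ transparent, but the constraint $s\le2$ caps it at $q\le1+\frac{2b}{N-4}$ for $N\ge5$; the paper's version is capped instead by admissibility of the unweighted Gagliardo--Nirenberg step, $1+q-\frac{2b}{N-1}\le\frac{2N}{N-4}$, which reaches higher but, for $b>0$, also falls short of $q^e$. So neither argument, as literally written, covers the full stated range.

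Two caveats on the remaining pieces of your proposal. First, splitting at the fixed radius $|x|=1$ cannot by itself yield \eqref{gg}: the target inequality is invariant under $v\mapsto v(\lambda\cdot)$, while your interior and exterior bounds each carry a different homogeneity in $(\|v\|,\|\Delta v\|)$, so their sum at a fixed cut-off is not of the form $\|v\|^{1+q-D}\|\Delta v\|^{D}$. You must split at a radius $R$ and optimize over $R$ (equivalently, rescale $v$ before splitting); only then does scale invariance force the exponents to recombine into $D$. As sketched, "reconciling by homogeneity" is the missing step, not a routine one. Second, the variational machinery (Weinstein quotient, compact radial embedding, Brezis--Lieb splitting) is not needed for the statement as written: once the inequality holds with some finite constant, the sharp constant exists as the supremum of $\int_{\R^N}|v|^{1+q}|x|^b\,dx\big/\big(\|v\|^{1+q-D}\|\Delta v\|^{D}\big)$ over nonzero $v\in H^2_{rd}$; attainment of an extremizer is a separate question that the proposition does not claim and the paper does not address.
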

\begin{proof}
    Let us write using Lemma \ref{sblv},

    \begin{align}
        \int_{\R^N}|x|^{b}|v|^{1+q}\,dx
        &=\int_{\R^N}\big(|x|^{\frac{N-1}{2}}|v|\big)^\frac{2b}{N-1}|v|^{1+q-\frac{2b}{N-1}}\,dx\nonumber\\
        &\lesssim \big(\|v\|\|\nabla v\|\big)^\frac{b}{N-1}\int_{\R^N}|v|^{1+q-\frac{2b}{N-1}}\,dx\nonumber\\
        &\lesssim \big(\|v\|\|\nabla v\|\big)^\frac{b}{N-1}\big(\|v\|^{1-\frac N2(\frac12-\frac1r)}\|\Delta v\|^{\frac N2\big(\frac12-\frac1{1+q-\frac{2b}{N-1}}\big)}\big)^{1+q-\frac{2b}{N-1}}\label{gg1}.
    \end{align}

    Moreover, thanks to the interpolation estimate \cite{gl},
    
    \begin{align}
    \|\nabla\cdot\|_r^2\lesssim\|\cdot\|_r\|\Delta\|_r,\quad \forall\, 1\leq r<\infty\label{ntr},    
    \end{align}
    
    via \eqref{gg1}, we get

\begin{align}
        \int_{\R^N}|x|^{b}|v|^{1+q}\,dx
        &\lesssim \big(\|v\|\|\nabla v\|\big)^\frac{b}{N-1}\big(\|v\|^{1-\frac N2(\frac12-\frac1r)}\|\Delta v\|^{\frac N2\big(\frac12-\frac1{1+q-\frac{2b}{N-1}}\big)}\big)^{1+q-\frac{2b}{N-1}}\nonumber\\
        &\lesssim \big(\|v\|^\frac32\|\Delta\|^\frac12\big)^\frac{b}{N-1}\big(\|v\|^{1-\frac N2(\frac12-\frac1r)}\|\Delta v\|^{\frac N2\big(\frac12-\frac1{1+q-\frac{2b}{N-1}}\big)}\big)^{1+q-\frac{2b}{N-1}}\nonumber\\
        &\lesssim\|v\|^{1+q-D}\|\Delta v\|^{D}.
    \end{align}
    
\end{proof}

The rest of this paper is organized as follows. Section \ref{sec2} develops a local theory in the energy space. Section \ref{sec3} develops a local theory in $H^1$. Section \ref{sec4} develops a global theory in the energy space. 
\section{Energy Local well-posedness}\label{sec2}
This section is devoted to prove Theorem \ref{loc} about the existence and uniqueness of solutions to the non-linear Schr\"odinger problem \eqref{S}. 
\subsection{Existence of solutions}
This sub-section establishes the local existence of energy solutions. A standard fixed point argument is used. Take $R>0$ to be fixed later and $u,v$ in the space

 \begin{align}
     B_T(R):=\Big\{v: \langle\Delta\rangle v\in C\big([0,T],L^2\big)\displaystyle\bigcap_{(p,r)\in\Gamma} L_T^p(L^{r})\quad\mbox{s.\,t}\quad \sup_{(p,r)\in\Gamma}\|\langle\Delta\rangle v \|_{L_T^p(L^{r})}\leq R\Big\},
 \end{align} 
 
equipped with the complete distance

\begin{align}
    d(u,v):=\|u-v\|_{\Lambda(0,T)}=\sup_{(p,r)\in\Gamma}\|u-v\|_{L_T^p(L^r(\R^N))}.\label{dst}
\end{align}

Take the integral function

\begin{align}
\digamma(v):=e^{i\cdot\Delta^2}v_0-i\int_0^\cdot e^{i(\cdot-\tau)\Delta^2}\Big[|v|^{q-1}|x|^{b}v\Big]\,d\tau.\label{ntg}
\end{align}

Let $s\in\{\frac12,2\}$, using Strichartz estimate for $(p,r)\in\Gamma$, it follows that

\begin{align}
d\big(\digamma(v_1),\digamma(v_2)\big)
&\lesssim \big\||x|^{b}\big(|v_1|^{q-1}v_1-|v_2|^{q-1}v_2\big)\big\|_{L^{p'}_T(L^{r'})}\nonumber\\
&\lesssim\big \||x|^{b}\big(|v_1|^{q-1}+|v_2|^{q-1}\big)|v_1-v_2|\big\|_{L^{p'}_T(L^{r'})}\nonumber\\
&\lesssim\big \|\Big(\big(|x|^{\frac{N-2s}{2}}|v_1|\big)^\frac{2b}{N-2s}|v_1|^{q-1-\frac{2b}{N-2s}}+\big(|x|^{\frac{N-2s}{2}}|v_2|\big)^\frac{2b}{N-2s}|v_2|^{q-1-\frac{2b}{N-2s}}\Big)|v_1-v_2|\big\|_{L^{p'}_T(L^{r'})}.\label{2.1}
\end{align}


Therefore, applying \eqref{frcs} and using \eqref{2.1}, we obtain provided that ${q - 1 - \frac{2b}{N - 2s} \geq 0}$,

\begin{align}
d\big(\digamma(v_1),\digamma(v_2)\big)
&\lesssim\sum_{k=1}^2\|v_k\|_{L^\infty_T(H^s)}^{\frac{2b}{N-2s}}\big \||v_k|^{q-1-\frac{2b}{N-2s}}|v_1-v_2|\big\|_{L^{p'}_T(L^{r'})}\nonumber\\
&\lesssim R^{\frac{2b}{N-2s}}\sum_{k=1}^2\big \|\|v_k\|_{r}^{q-1-\frac{2b}{N-2s}}\|v_1-v_2\|_r\big\|_{L^{p'}(0,T)},\label{2.2}
\end{align}

where, we used H\"older estimate with the equality

\begin{align}
    1-\frac1r=(q-1-\frac{2b}{N-2s})\frac1r+\frac1r,
\end{align}

which reads via the admissibility condition,

\begin{align}
  2\leq r:=q+1-\frac{2b}{N-2s}\leq  \frac{2N}{N-4}.\label{r}
\end{align}

Here and hereafter, we take the convention $a\leq\frac10$ means $a<\infty$. Thus,

\begin{align}
  1+\frac{2b}{N-2s}\leq q\leq 1+\frac{2b}{N-2s}+\frac{8}{N-4}.\label{2.3}
\end{align}

Hence, by \eqref{2.2} and \eqref{2.3} via Sobolev embedding, we write 

\begin{align}
d\big(\digamma(v_1),\digamma(v_2)\big)
&\lesssim R^{\frac{2b}{N-1}}\sum_{k=1}^2\big \|v_k\big\|_{L^\infty_T(H^{2})}^{q-1-\frac{2b}{N-2s}}\|v_1-v_2\|_{L_T^p(L^r)}T^{1-\frac{2}{p}}\nonumber\\
&\lesssim R^{q-1}T^{1-\frac{N(-1+q-\frac{2b}{N-2s})}{4(1+q-\frac{2b}{N-2s})}}d(v_1,v_2)\nonumber\\
&= R^{q-1}T^{-\frac{N-4}{4(1+q-\frac{2b}{N-2s})}\big(q-1-\frac{2b}{N-2s}-\frac8{N-4}\big)}d(v_1,v_2).\label{2.4}
\end{align}


Moreover, taking $v_2=0$, in \eqref{2.4}, we write

\begin{align}
\|\digamma(v_1)\|_{\Lambda(0,T)}
&\leq d\big(\digamma(v_1),\digamma(0)\big)+C\|v_0\|\nonumber\\
&\lesssim\|v_0\|+ R^{q}T^{-\frac{N-4}{4(1+q-\frac{2b}{N-2s})}\big(q-1-\frac{2b}{N-2s}-\frac8{N-4}\big)}.\label{2.4'}
\end{align}

Now, using Strichartz estimates, via \eqref{frcs}, we consider the derivative term

\begin{align}
    \|\Delta(\digamma(v_1))\|_{L_T^p(L^{r})}
    &\lesssim \|\Delta v_0\|+\|\nabla\big(|x|^b|v_1|^{q-1}v_1\big)\|_{L_T^2(L^\frac{2N}{2+N})}\nonumber\\
    &\lesssim \|\Delta v_0\|+\||x|^b|v_1|^{q-1}\big(|\nabla v_1|+|x|^{-1}|v_1|\big)\|_{L_T^2(L^\frac{2N}{2+N})}\nonumber\\
    &= \|\Delta v_0\|+\|\big(|x|^\frac{N-2s}{2}|v_1|\big)^\frac{2b}{N-2s}|v_1|^{q-1-\frac{2b}{N-2s}}\big(|\nabla v_1|+|x|^{-1}|v_1|\big)\|_{L_T^2(L^\frac{2N}{2+N})}\nonumber\\
    &\lesssim \|\Delta v_0\|+\|v_1\|_{L_T^\infty(H^s)}^\frac{2b}{N-2s}\||v_1|^{q-1-\frac{2b}{N-2s}}\big(|\nabla v_1|+|x|^{-1}|v_1|\big)\|_{L_T^2(L^\frac{2N}{2+N})}\label{2.5'}.
\end{align}

We discuss several cases.\\

$\bullet$ First case 

\begin{align}
{N\geq5, \quad 1+\frac{2b}{N-2s}+\frac{4}{N-4}\leq  q\leq 1+\frac{2b}{N-2s}+\frac{8}{N-4}}.\label{cs1}
\end{align}

By \eqref{2.5'} via \eqref{hrd}, H\"older estimate and Sobolev embedding, we write

\begin{align}
    \|\Delta(\digamma(v_1))\|_{L_T^p(L^{r})}
    &\lesssim \|\Delta v_0\|+R^\frac{2b}{N-2s}\big\||v_1|^{q-1-\frac{2b}{N-2s}}\big(|\nabla v_1|+|x|^{-1}|v_1|\big)\big\|_{L_T^2(L^\frac{2N}{2+N})}\nonumber\\
    &\lesssim \|\Delta v_0\|+R^\frac{2b}{N-2s}\Big\|\|v_1\|_{1/(\frac1r-\frac2N)}^{q-1-\frac{2b}{N-2s}}\big(\|\nabla v_1\|_{1/(\frac1r-\frac1N)}+\||x|^{-1}|v_1|\|_{1/(\frac1r-\frac1N)}\big)\Big\|_{L^2(0,T)}\nonumber\\
    &\lesssim \|\Delta v_0\|+R^\frac{2b}{N-2s}\Big\|\|\Delta v_1\|_{r}^{q-\frac{2b}{N-2s}}\Big\|_{L^2(0,T)}\label{2.6},
\end{align}

where

\begin{align}
    r&<\frac N2,\quad 2\leq r\leq\frac{2N}{N-4};\label{n5}\\
    \frac{2+N}{2N}&=(q-1-\frac{2b}{N-2s})(\frac{1}{r}-\frac2N)+\frac{1}{r}-\frac1N.
\end{align}

Thus, we get ${N\geq5}$ and 

\begin{align}
    \frac{N-4}{2N}\leq\frac1r=\frac2N+\frac{1}{2(q-\frac{2b}{N-2s})}\leq\frac12,\quad \frac1r>\frac2N.\label{2.7}
\end{align}

The estimates in \eqref{2.7} are

\begin{align}
   \frac{N}{N-4}\leq  q-\frac{2b}{N-2s}\leq \frac{N}{N-8}.\label{2.8'}
\end{align}

Clearly, \eqref{2.8'} is satisfied by \eqref{cs1}. Hence, by \eqref{2.6}, via H\"older estimate, we get 

\begin{align}
   \|\Delta(\digamma(v_1))\|_{L_T^p(L^{r})}
    &\lesssim \|\Delta v_0\|+R^\frac{2b}{N-2s}\|\Delta v_1\|_{L^{2(q-\frac{2b}{N-2s})}_T(L^r)}^{q-\frac{2b}{N-2s}}\nonumber\\
    &\lesssim \|\Delta v_0\|+R^\frac{2b}{N-2s}T^{\frac12-\frac{q-\frac{2b}{N-2s}}{p}}\|\Delta v_1\|_{L^{p}_T(L^r)}^{q-\frac{2b}{N-2s}}\nonumber\\
    &\lesssim \|\Delta v_0\|+R^qT^{-\frac{N-4}8\big({q-1-\frac{2b}{N-2s}}-\frac{8}{N-4}\big)}\label{2.10}.
\end{align}

Here, by \eqref{2.7}, we check that 

$$\frac{1}{p}-\frac{1}{2(q-\frac{2b}{N-2s})}=-\frac{4+N}8\big(\frac1{q-\frac{2b}{N-2s}}-\frac{N-4}{N+4}\big)\leq0.$$
$\bullet$ Second case 

\begin{align}
{N\geq5, \quad 1+\frac{2b}{N-2s}+\frac{2}{N-4}\leq  q< 1+\frac{2b}{N-2s}+\frac{4}{N-4}}.\label{cs2}
\end{align}

By \eqref{2.5'} via \eqref{hrd}, H\"older estimate and Sobolev embedding, we write

\begin{align}
    \|\Delta(\digamma(v_1))\|_{L_T^p(L^{r})}
    &\lesssim \|\Delta v_0\|+R^\frac{2b}{N-2s}\big\||v_1|^{q-1-\frac{2b}{N-2s}}\big(|\nabla v_1|+|x|^{-1}|v_1|\big)\big\|_{L_T^2(L^\frac{2N}{2+N})}\nonumber\\
    &\lesssim \|\Delta v_0\|+R^\frac{2b}{N-2s}\Big\|\|v_1\|_{2^*}^{q-1-\frac{2b}{N-2s}}\big(\|\nabla v_1\|_{r}+\||x|^{-1}|v_1|\|_{r}\big)\Big\|_{L^2(0,T)}\nonumber\\
    &\lesssim \|\Delta v_0\|+R^\frac{2b}{N-2s}\Big\|\|v_1\|_{H^2}^{q-1-\frac{2b}{N-2s}}\|\langle\Delta v_1\rangle\|_{r}\Big\|_{L^2(0,T)}\label{cs2-2.6},
\end{align}

where, $2^*_\tau:=\frac{2N}{N-2\tau}$, $2^*:=2^*_2$ and

\begin{align}
    r&< N,\quad 2\leq r\leq\frac{2N}{N-4};\\
    \frac{2+N}{2N}&=(q-1-\frac{2b}{N-2s})(\frac{1}{2}-\frac2N)+\frac{1}{r}.
\end{align}

Thus, we get ${N\geq5}$ and 

\begin{align}
    \frac{N-4}{2N}\leq\frac1r=\frac{2+N-(N-4)(q-1-\frac{2b}{N-2s})}{2N}\leq\frac12,\quad \frac1r>\frac1N.\label{cs2-2.7}
\end{align}

The estimates in \eqref{cs2-2.7} are 

\begin{align}
   \frac{2}{N-4}\leq  q-1-\frac{2b}{N-2s}\leq \frac{6}{N-4},\quad   q-1-\frac{2b}{N-2s}< \frac{N}{N-4}.\label{cs2-2.8'}
\end{align}

Clearly, \eqref{cs2-2.8'} are satisfied by \eqref{cs2}. Hence, by \eqref{cs2-2.6}, via H\"older estimate, we get 

\begin{align}
   \|\Delta(\digamma(v_1))\|_{L_T^p(L^{r})}
    &\lesssim \|\Delta v_0\|+R^{q-1}\|\langle\Delta\rangle v_1\|_{L^2_T(L^r)}\nonumber\\
    &\lesssim \|\Delta v_0\|+R^qT^{\frac12-\frac1{p}}\nonumber\\
    &\lesssim \|\Delta v_0\|+R^qT^{-\frac{N-4}8\big({q-1-\frac{2b}{N-2s}}-\frac{6}{N-4}\big)}\label{cs2-2.10}.
\end{align}

Here, by \eqref{cs2-2.7}, we check that 

$$\frac{1}{2}-\frac{1}{p}=-\frac{N-4}8\big({q-1-\frac{2b}{N-2s}}-\frac{6}{N-4}\big)>0.$$

$\bullet$ Third case 

\begin{align}
{N\geq5, \quad 1+\frac{2b}{N-2s}+\frac{2}{N-2}\leq  q< 1+\frac{2b}{N-2s}+\min\{\frac{2}{N-4},\frac{N}{N-2}\}}.\label{cs3}
\end{align}

By \eqref{2.5'} via \eqref{hrd}, H\"older estimate and Sobolev embedding, we write

\begin{align}
    \|\Delta(\digamma(v_1))\|_{L_T^p(L^{r})}
    &\lesssim \|\Delta v_0\|+R^\frac{2b}{N-2s}\big\||v_1|^{q-1-\frac{2b}{N-2s}}\big(|\nabla v_1|+|x|^{-1}|v_1|\big)\big\|_{L_T^2(L^\frac{2N}{2+N})}\nonumber\\
    &\lesssim \|\Delta v_0\|+R^\frac{2b}{N-2s}\Big\|\|v_1\|_{2_1^*}^{q-1-\frac{2b}{N-2s}}\big(\|\nabla v_1\|_{r}+\||x|^{-1}|v_1|\|_{r}\big)\Big\|_{L^2(0,T)}\nonumber\\
    &\lesssim \|\Delta v_0\|+R^\frac{2b}{N-2s}\Big\|\|v_1\|_{H^2}^{q-1-\frac{2b}{N-2s}}\|\langle\Delta v_1\rangle\|_{r}\Big\|_{L^2(0,T)}\label{cs3-2.6},
\end{align}

where

\begin{align}
    r&< N,\quad 2\leq r\leq\frac{2N}{N-4};\\
    \frac{2+N}{2N}&=(q-1-\frac{2b}{N-2s})(\frac{1}{2}-\frac1N)+\frac{1}{r}.
\end{align}

Thus, we get ${N\geq5}$ and 

\begin{align}
    \frac{N-4}{2N}\leq\frac1r=\frac{2+N-(N-2)(q-1-\frac{2b}{N-2s})}{2N}\leq\frac12,\quad \frac1r>\frac1N.\label{cs3-2.7}
\end{align}

The estimates in \eqref{cs3-2.7} are 

\begin{align}
   \frac{2}{N-4}\leq  q-1-\frac{2b}{N-2s}\leq \frac{6}{N-4},\quad   q-1-\frac{2b}{N-2s}< \frac{N}{N-2}.\label{cs3-2.8'}
\end{align}

Clearly, \eqref{cs3-2.8'} are satisfied by \eqref{cs3}. Hence, by \eqref{cs3-2.6}, via H\"older estimate, we get 

\begin{align}
   \|\Delta(\digamma(v_1))\|_{L_T^p(L^{r})}
    &\lesssim \|\Delta v_0\|+R^{q-1}\|\langle\Delta\rangle v_1\|_{L^2_T(L^r)}\nonumber\\
    &\lesssim \|\Delta v_0\|+R^qT^{\frac12-\frac1{p}}\nonumber\\
    &\lesssim \|\Delta v_0\|+R^qT^{-\frac{N-2}8\big({q-1-\frac{2b}{N-2s}}-\frac{6}{N-2}\big)}\label{cs3-2.10}.
\end{align}

Here, by \eqref{cs3-2.7}, we check that 

$$\frac{1}{2}-\frac{1}{p}=-\frac{N-2}8\big({q-1-\frac{2b}{N-2s}}-\frac{6}{N-2}\big)>0.$$

$\bullet$ Fourth case 

\begin{align}
{N\geq5, \quad 1+\frac{2b}{N-2s}+\frac{2}{N}\leq  q< 1+\frac{2b}{N-2s}+\frac{2}{N-2}}.\label{cs4}
\end{align}

By \eqref{2.5'} via \eqref{hrd}, H\"older estimate and Sobolev embedding, we write

\begin{align}
    \|\Delta(\digamma(v_1))\|_{L_T^p(L^{r})}
    &\lesssim \|\Delta v_0\|+R^\frac{2b}{N-2s}\big\||v_1|^{q-1-\frac{2b}{N-2s}}\big(|\nabla v_1|+|x|^{-1}|v_1|\big)\big\|_{L_T^2(L^\frac{2N}{2+N})}\nonumber\\
    &\lesssim \|\Delta v_0\|+R^\frac{2b}{N-2s}\Big\|\|v_1\|^{q-1-\frac{2b}{N-2s}}\big(\|\nabla v_1\|_{r}+\||x|^{-1}|v_1|\|_{r}\big)\Big\|_{L^2(0,T)}\nonumber\\
    &\lesssim \|\Delta v_0\|+R^\frac{2b}{N-2s}\Big\|\|v_1\|_{H^2}^{q-1-\frac{2b}{N-2s}}\|\langle\Delta v_1\rangle\|_{r}\Big\|_{L^2(0,T)}\label{cs4-2.6},
\end{align}

where

\begin{align}
    r&< N,\quad 2\leq r\leq\frac{2N}{N-4};\\
    \frac{2+N}{2N}&=(q-1-\frac{2b}{N-2s})\frac{1}{2}+\frac{1}{r}.
\end{align}

Thus, we get ${N\geq5}$ and 

\begin{align}
    \frac{N-4}{2N}\leq\frac1r=\frac{2+N-N(q-1-\frac{2b}{N-2s})}{2N}\leq\frac12,\quad \frac1r>\frac1N.\label{cs4-2.7}
\end{align}

The estimates in \eqref{cs4-2.7} are 

\begin{align}
   \frac2N\leq  q-1-\frac{2b}{N-2s}\leq \frac{6}{N},\quad   q-1-\frac{2b}{N-2s}< 1.\label{cs4-2.8'}
\end{align}

Clearly, \eqref{cs4-2.8'} are satisfied by \eqref{cs4}. Hence, by \eqref{cs4-2.6}, via H\"older estimate, we get 

\begin{align}
   \|\Delta(\digamma(v_1))\|_{L_T^p(L^{r})}
    &\lesssim \|\Delta v_0\|+R^{q-1}\|\langle\Delta\rangle v_1\|_{L^2_T(L^r)}\nonumber\\
    &\lesssim \|\Delta v_0\|+R^qT^{\frac12-\frac1{p}}\nonumber\\
    &\lesssim \|\Delta v_0\|+R^qT^{-\frac{N-2}8\big({q-1-\frac{2b}{N-2s}}-\frac{6}{N-2}\big)}\label{cs4-2.10}.
\end{align}

Here, by \eqref{cs4-2.7}, we check that 

$$\frac{1}{2}-\frac{1}{p}=-\frac{N}8\big({q-1-\frac{2b}{N-2s}}-\frac{6}{N}\big)>0.$$

Now, we gather \eqref{2.4}, \eqref{2.4'}, \eqref{2.10},\eqref{cs2-2.10}, \eqref{cs3-2.10} and \eqref{cs4-2.10}, to get for some $\delta>0$,

\begin{align}
d\big(\digamma(v_1),\digamma(v_2)\big)&\leq C R^{q-1}T^{-\frac{N-4}8\big({q-1-\frac{2b}{N-2s}}-\frac{8}{N-4}\big)}d(v_1,v_2);\label{2.14}\\
\|\digamma(v_1)\|_{\Lambda(0,T)}&\leq C\|v_0\|+C R^{q}T^{-\frac{N-4}8\big({q-1-\frac{2b}{N-2s}}-\frac{8}{N-4}\big)}\label{2.16};\\
 \|\Delta (\digamma(v_1))\|_{\Lambda(0,T)}&\leq C \|\Delta v_0\|+CR^qT^{\delta}\label{2.15}.
\end{align}

Letting $R:=4C\|v_0\|_{H^2}$ and if $q<q^e$,

\begin{align}
    0<T<\min\Big\{\Big(\frac1{4C R^{q-1}}\Big)^\frac8{(N-4)\big(1+\frac8{N-4}+\frac{2b}{N-2s}-q\big)},\Big(\frac1{4C R^{q-1}}\Big)^\frac1\delta \Big\}, 
\end{align}

or, $0<R,T\ll1$, if $q=q^e$, it follows that $\digamma$ is a contraction of $B_T(R)$. So, a direct application of a Picard fix point argument yields the existence of a unique solution to \eqref{S} in $B_T(R).$ Now, collecting the previous calculus, we obtain the local existence in the range 

\begin{align}
{N\geq5, \quad, s\in\{\frac12,2\}},\nonumber\\
{1+\frac{2b}{N-2s}+\frac{2}{N}\leq  q\leq 1+\frac{2b}{N-2s}+\frac{8}{N-4}},\quad N\geq6;\label{2.31}\\
{q-1-\frac{2b}{N-2s}\in\Big[\frac{2}{N},\frac{N}{N-2}\Big]\bigcup\Big[\frac{2}{N-4}, \frac{8}{N-4}}\Big],\quad N=5.\label{2.32}
\end{align}

Thus, the local existence follows for $ N\geq 6,$

\begin{align}
 q&\in \bigcup_{s\in\{\frac12,2\}} \Big[1+\frac{2b}{N-2s}+\frac{2}{N}, 1+\frac{2b}{N-2s}+\frac{8}{N-4}\Big]\nonumber\\
&=\Big[1+\frac{2b}{N-1}+\frac2N, 1+\frac{8+2b}{N-4}\Big],\quad\mbox{if}\quad b\leq\frac{(N-1)(4+3N)}{3N}.\label{2.33}
\end{align}

Moreover, the local existence follows for $N=5,$ $b\leq\frac{38}{45}$ and

\begin{align}
 q&\in \bigcup_{s\in\{\frac12,2\}} \Big[1+\frac{2b}{N-2s}+\frac{2}{N}, 1+\frac{2b}{N-2s}+\frac{N}{N-2}\Big] \bigcup\Big[1+\frac{2b}{N-2s}+\frac{2}{N-4},1+\frac{2b}{N-2s}+ \frac{8}{N-4}\Big]\nonumber\\
&= \Big[1+\frac{2b}{N-1}+\frac{2}{N}, 1+\frac{2b}{N-4}+\frac{N}{N-2}\Big] \bigcup\Big[1+\frac{2b}{N-1}+\frac{2}{N-4},q^e\Big].\label{2.36}
\end{align}

We regroup the local existence rage as $N\geq5$ and 

\begin{align}
    N&\geq6,\quad b\leq\frac{(N-1)(4+3N)}{3N},\quad 1+\frac2N+\frac{2b}{N-1}\leq  q\leq q^e;\label{2.40}\\ 
   N&=5,\quad \frac29\leq b\leq\frac{38}{45},\quad 1+\frac2N+\frac{2b}{N-1}\leq  q\leq q^e.
\end{align}

The local existence part of Theorem\ref{loc} is finished.

\subsection{Uniqueness of energy solutions}

This sub-section establishes the uniqueness of energy solutions. Let $v_1,v_2\in C\big([0,T],H^2_{rd}\big)$ to be two local solutions to \eqref{S}. Thus, by taking $s=2$ in \eqref{2.4}, we write

\begin{align}
d\big(v_1,v_2\big)
&\leq C \Big(\sum_{k=1}^2\big \|v_k\big\|_{L^\infty_T(H^{2})}^{q-1}\Big)T^{\frac{N-4}{4(1+q-\frac{2b}{N-4})}\big(\frac8{N-4}-[q-1-\frac{2b}{N-4}]\big)}d(v_1,v_2)\nonumber\\
&\leq C \Big(\sum_{k=1}^2\big \|v_k\big\|_{L^\infty_T(H^{2})}^{q-1}\Big)T^{\frac{N-4}{4(1+q-\frac{2b}{N-4})}\big(q^e-q\big)}d(v_1,v_2).\label{3.1}
\end{align}

We discuss two cases: 
\begin{enumerate}
    \item First case $q<q^e$. Taking $0<T\ll1$ in \eqref{3.1}, so that 
    
    \begin{align*}
    C\Big(\sum_{k=1}^2\big \|v_k\big\|_{L^\infty_T(H^{2})}^{q-1}\Big)T^{-\frac{N-4}{4(1+q-\frac{2b}{N-4})}\big(q-q^e\big)}<1,    
    \end{align*}
    
    it follows that $v_1=v_2$.

 \item Second case $q=q^e$. Taking $\|v_0\big\|_{H^{2}}\ll1$ and $0<T\ll1$ in \eqref{3.1}, so that with a continuity argument, $C\Big(\sum_{k=1}^2\big \|v_k\big\|_{L^\infty_T(H^{2})}^{q-1}\Big)<1$, it follows that $v_1=v_2$.

\end{enumerate}

\subsection{Global energy solutions}

If $\epsilon=1$ and $q<q^e$, by the conservation laws, we have $\sup_{t\in[0,T^{\max})}\|v(t)\|_{H^2}<\infty$, so $v$ is global. Now, take $\epsilon=-1$ and $q\leq q_m$. Using the Gagliardo-Nirenberg estimate \eqref{gg}, we write

\begin{align}
 E(v_0) 
 &=\int_{\R^N}\Big(|\Delta v(t,x)|^2-\frac{2}{1+q}|v(t,x)|^{1+q}|x|^{b}\Big)dx\nonumber\\
 &\geq\|\Delta v(t)\|^2-C\|v\|^{1+q-D}\|\Delta v(t)\|^D\nonumber\\
 &\geq\|\Delta v(t)\|^2\Big(1-C\|v\|^{1+q-D}\|\Delta v(t)\|^{D-2}\Big)\label{4.1}.
\end{align}

Since $q<q_m\Longleftrightarrow D<2$, hence, if $q<q_m$ or $q=q_m$ and $\|v_0\|\ll1$ we have $\sup_{t\in[0,T^{\max})}\|v(t)\|_{H^2}<\infty$, so $v$ is global.

\section{Local well-posedness with low regularity}\label{sec3}

This section is devoted to prove Theorem \ref{loc2} about the existence and uniqueness of solutions to the non-linear Schr\"odinger problem \eqref{S}. 
\subsection{Existence of solutions}
This sub-section establishes the local existence of solutions. A standard fixed point argument is used. Take $T,R>0$ to be fixed later and $u,v$ in the space

 \begin{align}
     B_T(R):=\Big\{v: \langle\nabla\rangle v\in C\big([0,T],L^2\big)\displaystyle\bigcap_{(p,r)\in\Gamma} L_T^p(L^{r})\quad\mbox{s.\,t}\quad \sup_{(p,r)\in\Gamma}\|\langle\nabla\rangle v \|_{L_T^p(L^{r})}\leq R\Big\},
 \end{align} 
 
equipped with the complete distance \eqref{dst}. We take also the integral function \eqref{ntg} and $s\in\{\frac12,1\}$. Thanks to Strichartz estimate for $(p,r)\in\Gamma$, it follows that 

\begin{align}
d\big(\digamma(v_1),\digamma(v_2)\big)
&\lesssim\sum_{k=1}^2\|v_k\|_{L^\infty_T(H^s)}^{\frac{2b}{N-2s}}\big \||v_k|^{q-1-\frac{2b}{N-2s}}|v_1-v_2|\big\|_{L^{p'}_T(L^{r'})}\nonumber\\
&\lesssim R^{\frac{2b}{N-2s}}\sum_{k=1}^2\big \|\|v_k\|_{1/\frac1r-\frac1N}^{q-1-\frac{2b}{N-2s}}\|v_1-v_2\|_r\big\|_{L^{p'}(0,T)},\label{4-2.2}
\end{align}

 where, we used H\"older estimate and \eqref{frcs}, with the equality

\begin{align}
    1-\frac1r=(q-1-\frac{2b}{N-2s})\big(\frac1r-\frac 1N\big)+\frac1r.
\end{align}

This reads via the admissibility condition, for $\lambda:=q-1-\frac{2b}{N-2s}$,

\begin{align}
  2\leq r:=\frac{N(2+\lambda)}{N+\lambda}\leq \frac{2N}{N-4}\quad\mbox{and}\quad r>N.\label{r'}
\end{align}

The estimate \eqref{r'} reads

\begin{align}
    (N-6)\lambda\leq8.\label{0r}
\end{align}
Moreover, by \eqref{4-2.2}, we have

\begin{align}
d\big(\digamma(v_1),\digamma(v_2)\big)
&\lesssim R^{\frac{2b}{N-1}}\sum_{k=1}^2\big \|v_k\big\|_{L^p_T(\dot W^{1,r})}^{q-1-\frac{2b}{N-2s}}\|v_1-v_2\|_{L_T^p(L^r)}T^{1-\frac{2+\lambda}{p}}\nonumber\\
&\lesssim R^{q-1}T^{1-\frac{\lambda(N-2)}8}d(v_1,v_2)\nonumber\\
&\lesssim R^{q-1}T^{-\frac{N-2}8\big(q-1-\frac{2b}{N-2s}-\frac{8}{N-2}\big)}d(v_1,v_2).\label{02.4}
\end{align}

Here, we suppose that $\lambda\geq0$ and 

\begin{align}
    1\geq\frac{2+\lambda}{p}
    &=\frac{2+\lambda}4\frac4{p}\nonumber\\
    &=\frac{2+\lambda}4N(\frac12-\frac1r)\nonumber\\
    &=\frac{2+\lambda}4N(\frac12-\frac{N+\lambda}{N(2+\lambda)})\nonumber\\
    &=\frac{\lambda(N-2)}8\label{02.5}.
\end{align}

So, \eqref{0r} and \eqref{02.5} read

\begin{align}
    0\leq q-1-\frac{2b}{N-2s}\leq \frac8{N-2}.\label{02.6}
\end{align}

Moreover, taking $v_2=0$, in \eqref{02.4}, we write

\begin{align}
\|\digamma(v_1)\|_{\Lambda(0,T)}
&\leq d\big(\digamma(v_1),\digamma(0)\big)+C\|v_0\|\nonumber\\
&\lesssim\|v_0\|+ R^{q}T^{-\frac{N-2}8\big(q-1-\frac{2b}{N-2s}-\frac{8}{N-2}\big)}.\label{02.4'}
\end{align}

Now, we will use the identity, 

\begin{align}
    |\nabla\big(|x|^b|v_1|^{q-1}v_1\big)|
    &\lesssim |x|^b|v_1|^{q-1}\big(|x|^{-1}|v_1|+|\nabla v_1|\big).\label{lp'}
\end{align}

So, using Strichartz estimates via \eqref{frcs}, we consider the derivative term  

\begin{align}
    \|\nabla(\digamma(v_1))\|_{L_T^p(L^{r})}
    &\lesssim \|\nabla v_0\|+\Big\||x|^b|v_1|^{q-1}\Big(|x|^{-1}|v_1|+|\nabla v_1|\Big)\Big\|_{L_T^{p'}(L^{r'})}\nonumber\\
    &\lesssim \|\nabla v_0\|+\Big\|\big(|x|^\frac{N-2s}{2}|v_1|\big)^\frac{2b}{N-2s}|v_1|^{q-1-\frac{2b}{N-2s}}\Big(|x|^{-1}|v_1|+|\nabla v_1|\Big)\Big\|_{L_T^{p'}(L^{r'})}\nonumber\\
    &\lesssim \|\nabla v_0\|+\|v_1\|_{L_T^\infty(H^s)}^\frac{2b}{N-2s}\Big\||v_1|^{q-1-\frac{2b}{N-2s}}\Big(|x|^{-1}|v_1|+|\nabla v_1|\Big)\Big\|_{L_T^{p'}(L^{r'})}\label{2.17'}.
\end{align}

Moreover, we write also by \eqref{hrd}, since $r'\leq2<N$,

\begin{align}
   \Big\||v_1|^{q-1-\frac{2b}{N-2s}}\Big(|x|^{-1}|v_1|+|\nabla v_1|\Big)\Big\|_{r'}
   &\lesssim\Big\|\Big(|x|^{-1}+|\nabla|\Big)|v_1|^{q-\frac{2b}{N-2s}-1}v_1\Big\|_{r'}\nonumber\\
   &\lesssim\Big\|\nabla v_1 v_1^{q-\frac{2b}{N-2s}-1}\Big\|_{r'}\label{2.17''}
\end{align}

Hence, by H\"older estimate via \eqref{2.17'} and \eqref{2.17''}, it follows that, for ${q\geq 1+\frac{2b}{N-2s}}$, yields

\begin{align}
    \|\nabla(\digamma(v_1))\|_{L_T^p(L^{r})}
    &\lesssim \|\nabla v_0\|+R^\frac{2b}{N-2s}\Big\|\|v_1\|_{r}^{q-1-\frac{2b}{N-2s}}\|\nabla v_1\|_{1/\frac1r-\frac1N}\Big\|_{L^{p'}(0,T)}\nonumber\\
    &\lesssim \|\nabla v_0\|+R^\frac{2b}{N-2s}T^{1-\frac{2+\lambda}{p}}\|v_1\|_{L_T^p(\dot W^{1,r})}^{q-1-\frac{2b}{N-2s}}\| v_1\|_{L_T^p(\dot W^{,r})}\nonumber\\
     &\lesssim \|\nabla v_0\|+R^qT^{-\frac{N-2}8\big(q-1-\frac{2b}{N-2s}-\frac{8}{N-2}\big)}\label{2.20'}.
\end{align}

Finally, we collect \eqref{2.20'}, \eqref{2.17'} and \eqref{02.4} to get

\begin{align}
     d\big(\digamma(v_1),\digamma(v_2)\big)&\leq C R^{q-1}T^{-\frac{N-2}8\big(q-1-\frac{2b}{N-2s}-\frac{8}{N-2}\big)}d(v_1,v_2)\label{02.28'};\\
         \|\langle\nabla\rangle (\digamma(v_1))\|_{L_T^p(L^{r})}     &\leq C \|v_0\|_{H^1}+CR^qT^{-\frac{N-2}8\big(q-1-\frac{2b}{N-2s}-\frac{8}{N-2}\big)}\label{02.29'}.
\end{align}

Letting $R:=2C\|v_0\|_{H^1}$ and, if $q<q^e_1$,

\begin{align}
    0<T<\Big(\frac1{2C R^{q-1}}\Big)^{{\frac{-8}{(N-2)\big(q-1-\frac{2b}{N-2s}-\frac{8}{N-2}\big)}}}, \label{2.30'}
\end{align}

and $0<R,T\ll1$, if $q=q_1^e$, it follows that $\digamma$ is a contraction of $B_T(R)$. So, a direct application of a Picard fix point argument yields the existence of a unique solution to \eqref{S} in $B_T(R).$ The local existence is proved. Now, we obtain the local existence in the range 

\begin{align}
{N\geq3, \quad, s\in\{\frac12,1\}},\nonumber\\
{1+\frac{2b}{N-2s}\leq  q\leq 1+\frac{2b}{N-2s}+\frac8{N-2}}.\label{2.31'}
\end{align}

Thus, the local existence follows for $N\geq 3,$

\begin{align}
 q&\in \bigcup_{s\in\{\frac12,1\}} \Big[1+\frac{2b}{N-2s}, 1+\frac{2b}{N-2s}+\frac{8}{N-2}\Big]\nonumber\\
&=\Big[1+\frac{2b}{N-1}, 1+\frac{8+2b}{N-2}\Big],\quad\mbox{if}\quad b\leq 4(N-1).\label{2.33'}
\end{align}

\subsection{Uniqueness of solutions}

This sub-section establishes the uniqueness of energy solutions. Let $v_1,v_2\in C\big([0,T],H^1_{rd}\big)$ to be two local solutions to \eqref{S}. Thus, by taking account of \eqref{02.28'} with $s=1$, we write

\begin{align}
d\big(v_1,v_2\big)
&\leq C \Big(\sum_{k=1}^2\big \|v_k\big\|_{L^\infty_T(H^{1})}^{q-1}\Big)T^{\frac{N-2}8\big(q^e_1-q\big)}d(v_1,v_2).\label{03.1'}
\end{align}

We discuss two cases: 
\begin{enumerate}
    \item First case $q<q^e_1$. Taking $0<T\ll1$ in \eqref{03.1'}, so that 
    
    \begin{align*}
    C\Big(\sum_{k=1}^2\big \|v_k\big\|_{L^\infty_T(H^{1})}^{q-1}\Big)T^{\frac{N-2}8\big(q^e_1-q\big)}<1,    
    \end{align*}
    
    it follows that $v_1=v_2$.

 \item Second case $q=q_1^e$. Taking $\|v_0\big\|_{H^{1}}\ll1$ and $0<T\ll1$ in \eqref{03.1'}, so that with a continuity argument, $C\Big(\displaystyle\sum_{k=1}^2\big \|v_k\big\|_{L^\infty_T(H^{1})}^{q-1}\Big)<1$, it follows that $v_1=v_2$.

\end{enumerate}

\section{Small data global theory}\label{sec4}

This section proves Theorem \ref{glb} about the global existence and scatter of solutions to the non-linear Schr\"odinger problem \eqref{S}. We start with some nonlinear estimates.

\begin{lem}\label{nlest}
The next nonlinear estimates hold.
\begin{align}
\|v^{q-1}w|x|^b\|_{\Lambda'^{-s_\nu}}&\lesssim \|v\|_{L^\infty(H^2)}^{\frac{2b}{N-1}}\|v\|_{\Lambda^{s_\nu}}^{q-1-{\frac{2b}{N-1}}}\|w\|_{\Lambda^{s_\nu}};\label{4.0}\\
        \|v^{q-1}w|x|^b\|_{\Lambda'}&\lesssim \|v\|_{L^\infty(H^2)}^{\frac{2b}{N-1}}\|v\|_{\Lambda^{s_\nu}}^{q-1-{\frac{2b}{N-1}}}\|w\|_{\Lambda};\label{4.2}\\
        \|\Delta\big(|v|^q|x|^b\big)\|_{\Lambda'}&\lesssim \|v\|_{L^\infty(H^2)}^{\frac{2b}{N-1}}\|v\|_{\Lambda^{s_\nu}}^{q-1-{\frac{2b}{N-1}}}\|\Delta v\|_{\Lambda}.\label{4.3}
    \end{align}
\end{lem}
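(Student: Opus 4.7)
The plan is to unify all three estimates under a single core argument: use the Strauss estimate \eqref{frcs} with $s=1/2$ to trade the unbounded weight $|x|^b$ for a power of $\|v\|_{H^2}$, and then close the remaining $|x|$–free inequality by Hölder plus Strichartz admissibility, exactly in the spirit of the local theory of Section \ref{sec2}.

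For the Strauss absorption I decompose
\[
|x|^b |v|^{q-1} = \bigl(|x|^{(N-1)/2}|v|\bigr)^{2b/(N-1)} \, |v|^{\,q-1-2b/(N-1)},
\]
which is legitimate because the hypothesis $q\geq 2+\frac{2b}{N-1}$ of Theorem \ref{glb} ensures $q-1-\frac{2b}{N-1}\geq 0$. Applying \eqref{frcs} with $s=1/2$ and interpolating $\|\nabla v\|\lesssim \|v\|^{1/2}\|\Delta v\|^{1/2}\lesssim\|v\|_{H^2}$ gives the pointwise bound
\[
\bigl\|(|x|^{(N-1)/2}|v|)^{2b/(N-1)}\bigr\|_{L^\infty_x}\lesssim \|v\|_{H^2}^{2b/(N-1)},
\]
which factors out the $\|v\|_{L^\infty_tH^2}^{2b/(N-1)}$ piece appearing on the right–hand side of each of \eqref{4.0}--\eqref{4.3}.

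For \eqref{4.0} and \eqref{4.2} it then remains to estimate the weight–free expression $\bigl\||v|^{\,q-1-2b/(N-1)}w\bigr\|$ in the dual Strichartz space $\Lambda'^{-s_\nu}$ (respectively $\Lambda'$). I choose an $s_\nu$–admissible pair $(p,r)\in\Gamma_{s_\nu}$ together with a partner $(\tilde p,\tilde r)\in\Gamma_{-s_\nu}$, apply Hölder in space to distribute the $q-1-\frac{2b}{N-1}$ copies of $v$ into $L^r$ through the Sobolev embedding $\dot H^{s_\nu}\hookrightarrow L^r$, and Hölder in time via the admissibility identity \eqref{dm1}; this is the same space–time splitting as in \eqref{2.1}--\eqref{2.3}, now pinned at the scaling level $s_\nu$. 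For \eqref{4.3} I first expand by Leibniz
\[
\Delta\bigl(|v|^q|x|^b\bigr) = |x|^b\Delta(|v|^q) + 2\nabla(|x|^b)\cdot\nabla(|v|^q) + \Delta(|x|^b)\,|v|^q,
\]
use $|\nabla(|x|^b)|\lesssim |x|^{b-1}$ and $|\Delta(|x|^b)|\lesssim |x|^{b-2}$ to reduce to the three summands $|x|^b|v|^{q-1}|\Delta v|$, $|x|^{b-1}|v|^{q-1}|\nabla v|$, and $|x|^{b-2}|v|^q$. The first is covered by the previous step with $w=\Delta v$; for the other two I invoke the Hardy inequality \eqref{hrd} to convert each negative power of $|x|$ attached to $v$ into a derivative of $v$, reinstating the clean weight $|x|^b$ and reducing the problem to the first case.

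The main obstacle is not the pointwise absorption, which is mechanical once \eqref{frcs} is available, but the combinatorics of selecting $(p,r)$, $(\tilde p,\tilde r)$, and the Sobolev target so that (i) the spatial Hölder balance produced by $q-1-\frac{2b}{N-1}$ copies of $v$ in $L^r$ matches (ii) the admissibility constraints \eqref{dm1}--\eqref{dm2} on both pairs and (iii) the embedding $\dot H^{s_\nu}\hookrightarrow L^r$. This is exactly the tension that forced the case splits in Section \ref{sec2}; at the scaling level $s_\nu=s_c$ it collapses to a single scalar constraint on $r$, and the verification that the resulting $r$ lies in the admissible window \eqref{dm2} is precisely what fixes the standing range $q\geq 2+\frac{2b}{N-1}$, $q<q^e$ of Theorem \ref{glb}.
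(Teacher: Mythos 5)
Your overall strategy coincides with the paper's: absorb the weight $|x|^b$ through the Strauss estimate \eqref{frcs} with $s=\tfrac12$ at the cost of $\|v\|_{L^\infty(H^2)}^{2b/(N-1)}$, then close the weight--free product by H\"older in space and time against admissible pairs at the levels $0$ and $\pm s_\nu$, and use Hardy \eqref{hrd} for the negative powers of $|x|$ produced by differentiating the weight. For \eqref{4.0} and \eqref{4.2} this is essentially the paper's argument; the only cosmetic difference is that the paper fixes the single Lebesgue exponent $r=1+q-\frac{2b}{N-1}=2+\nu$ and writes down explicit companion exponents $p,k,m$ in $\Gamma$, $\Gamma_{s_\nu}$, $\Gamma_{-s_\nu}$, whereas you leave the pair selection implicit and mention a Sobolev embedding $\dot H^{s_\nu}\hookrightarrow L^r$ that the paper does not need (all copies of $v$ are placed directly in the Strichartz component $L^k(L^r)$ of $\Lambda_{s_\nu}$, not in $L^\infty_t\dot H^{s_\nu}$).

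There is, however, a genuine gap in your treatment of \eqref{4.3}: your Leibniz expansion silently replaces $\Delta(|v|^q)$ by $|v|^{q-1}|\Delta v|$ and drops the term $|x|^b|v|^{q-2}|\nabla v|^2$. This term is not covered by ``the previous step with $w=\Delta v$'', nor by Hardy, since it carries two first--order derivatives and no negative power of $|x|$. The paper's identity \eqref{lp} retains it and disposes of it with the interpolation inequality \eqref{ntr}, $\|\nabla u\|_r^2\lesssim\|u\|_r\|\Delta u\|_r$, under the extra requirement $q-2-\frac{2b}{N-1}\geq0$. Note that this requirement is the real reason for the standing hypothesis $q\geq 2+\frac{2b}{N-1}$ in Theorem \ref{glb}: in your write--up you invoke that hypothesis only to guarantee $q-1-\frac{2b}{N-1}\geq0$, which would already follow from $q\geq 1+\frac{2b}{N-1}$, so the role of the stronger lower bound is lost. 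Moreover, the gradient--squared difficulty cannot be avoided by reshuffling: after you apply Hardy to the summand $|x|^{b-2}|v|^{q}$ you are led to quantities of the type $\|\Delta\big(|v|^{q-1-\frac{2b}{N-1}}v\big)\|_{r'}$, whose expansion reintroduces exactly the same $|v|^{q-2-\frac{2b}{N-1}}|\nabla v|^2$ contribution. To repair the proof you should restore this term in the expansion and estimate it via \eqref{ntr} (or an equivalent Gagliardo--Nirenberg step), checking in addition that $r'<\frac N2$ so that Hardy with $s=2$ is applicable, as the paper does.
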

\begin{proof}
    We recall that $r:=1+q-\frac{2b}{N-1}:=2+\nu$, $s_\nu:=\frac N2-\frac{4}{\nu}{>0}$ and we denote by

    \begin{align}
       p:=\frac{8(2+\nu)}{N\nu},\quad k:=\frac{4\nu(2+\nu)}{8-(N-4)\nu},\quad m:=\frac{4\nu(2+\nu)}{N\nu^2+(N-4)\nu-8}.\label{4.6}\end{align}

A direct calculus checks that

 \begin{align}
        (p,r)\in\Lambda,\quad (k,r)\in\Lambda_{s_\nu},\quad (m,r)\in\Lambda_{-s_\nu}.\label{4.4}
    \end{align}
Using H\"older and Strichar estimates via \eqref{frcs}, we get

\begin{align}
    \big\|v^{q-1}w|x|^b\big\|_{L^{m'}(L^{r'})}
    &= \|\big(|x|^\frac{N-1}{2}|v|\big)^\frac{2b}{N-1}|v|^{q-1-\frac{2b}{N-1}}w\|_{L^{m'}(L^{r'})}\nonumber\\
        &\lesssim\|v\|_{L^\infty(H^2)}^\frac{2b}{N-1} \|v^{q-1-\frac{2b}{N-1}}w\|_{L^{m'}(L^{r'})}\nonumber\\
        &=\|v\|_{L^\infty(H^2)}^\frac{2b}{N-1} \|v\|_{L^{k}(L^{r})}^{q-1-\frac{2b}{N-1}}\|w\|_{L^{k}(L^{r})}\label{4.7},
\end{align}

    which proves \eqref{4.0}. Moreover, using the next identity, for ${q\geq2}$,

\begin{align}
    |\Delta\big(|x|^b|v_1|^{q-1}v_1\big)|
    &\lesssim |x|^b|v_1|^{q-1}|x|^{-2}|v_1|+|x|^b\big(|v_1|^{q-1}|\Delta v_1|+|v_1|^{q-2}|\nabla v_1|^2\big)\nonumber\\
    &+|x|^b|v_1|^{q-1}|x|^{-1}|\nabla v_1|,\label{lp}
\end{align}

 we write

  \begin{align}
    \big\|\Delta\big(|v|^q|x|^b\big)\big\|_{L^{p'}(L^{r'})}
    &\lesssim \|\big(|x|^\frac{N-1}{2}|v|\big)^\frac{2b}{N-1}|v|^{q-1-\frac{2b}{N-1}}\big(|x|^{-2}|v|+|\Delta v|+|x|^{-1}|\nabla v|\big)\nonumber\\
    &+\big(|x|^\frac{N-1}{2}|v|\big)^\frac{2b}{N-1}|v|^{q-2-\frac{2b}{N-1}}|\nabla v|^2\big\|_{L^{p'}(L^{r'})}\nonumber\\
        &\lesssim\|v\|_{L^\infty(H^2)}^\frac{2b}{N-1} \||v|^{q-1-\frac{2b}{N-1}}\big(|x|^{-2}|v|+|\Delta v|+|x|^{-1}|\nabla v|\big)\|_{L^{p'}(L^{r'})}\nonumber\\
        &+\|v\|_{L^\infty(H^2)}^\frac{2b}{N-1} \||v|^{q-2-\frac{2b}{N-1}}|\nabla v|^2\|_{L^{p'}(L^{r'})}\label{4.8},
\end{align}  

Hence, arguing as in \eqref{4.7} and using \eqref{hrd} with \eqref{ntr}, since $r'<\frac N2$ because $N\geq5$ and $r\geq2$, we get for ${q-2-\frac{2b}{N-1}\geq0}$, 

\begin{align}
    \big\|\Delta\big(|v|^q|x|^b\big)\big\|_{L^{p'}(L^{r'})}
         &\lesssim\|v\|_{L^\infty(H^2)}^\frac{2b}{N-1}\Big\|\|\Delta\big(|v|^{q-1-\frac{2b}{N-1}}v\big)\|_{r'}+\||\Delta v||v|^{q-1-\frac{2b}{N-1}}\|_{r'}\Big\|_{L^{p'}(0,\infty)}\nonumber\\
        &+\|v\|_{L^\infty(H^2)}^\frac{2b}{N-1} \big\||v|^{q-2-\frac{2b}{N-1}}|\nabla v|^2\big\|_{L^{p'}(L^{r'})}\nonumber\\
        &\lesssim\|v\|_{L^\infty(H^2)}^\frac{2b}{N-1} \|v\|_{L^{k}(L^{r})}^{q-1-\frac{2b}{N-1}}\|\Delta v\|_{L^{p}(L^{r})}\label{4.9}.
\end{align}  

This proves \eqref{4.3} and \eqref{4.2} follows similarly.
\end{proof}

Now, we establish Theorem \ref{glb}. Take the space

\begin{align}
    \mathcal{X}:=\Big\{v:\quad\|v\|_{\Lambda_{s_\nu}}\leq2\|e^{i\cdot\Delta^2}v_0\|_{\Lambda_{s_\nu}},\quad \|\langle\Delta\rangle v\|_{\Lambda}\leq2c\|v_0\|_{H^2}\Big\},\label{4.10}
\end{align}

equipped with the complete metric

\begin{align}
d(u,v):=\|u-v\|_{\Lambda}+\|u-v\|_{\Lambda_{s_\nu}}.    \label{4.11}
\end{align}

Using Strichartz estimates and Lemma \ref{nlest}, we write for $v_1,v_2\in \mathcal{X}$,

\begin{align}
d\big(\digamma(v_1),\digamma(v_2)\big)
&\lesssim \big\||x|^{b}\big(|v_1|^{q-1}v_1-|v_2|^{q-1}v_2\big)\big\|_{\Lambda'_{-s_\nu}}+\big\||x|^{b}\big(|v_1|^{q-1}v_1-|v_2|^{q-1}v_2\big)\big\|_{\Lambda'}\nonumber\\
&\lesssim \big\||x|^{b}\big(|v_1|^{q-1}+|v_2|^{q-1}\big)\big(v_1-v_2\big)\big\|_{\Lambda'_{-s_\nu}}+\big\||x|^{b}\big(|v_1|^{q-1}+|v_2|^{q-1}\big)\big(v_1-v_2\big)\big\|_{\Lambda'}\nonumber\\
&\lesssim \big(\|v_1\|_{L^\infty(H^2)}^{\frac{2b}{N-1}}+\|v_2\|_{L^\infty(H^2)}^{\frac{2b}{N-1}}\big)\big(\|v_1\|_{\Lambda^{s_\nu}}^{q-1-{\frac{2b}{N-1}}}+\|v_2\|_{\Lambda^{s_\nu}}^{q-1-{\frac{2b}{N-1}}}\big)\Big(\|v_1-v_2\|_{\Lambda^{s_\nu}}+\|v_1-v_2\|_{\Lambda}\Big).\label{4.12'}
\end{align}

Hence, by \eqref{4.12'}, we have

\begin{align}
d\big(\digamma(v_1),\digamma(v_2)\big)
&\leq \big(\|v_1\|_{L^\infty(H^2)}^{\frac{2b}{N-1}}+\|v_2\|_{L^\infty(H^2)}^{\frac{2b}{N-1}}\big)\big(\|v_1\|_{\Lambda^{s_\nu}}^{q-1-{\frac{2b}{N-1}}}+\|v_2\|_{\Lambda^{s_\nu}}^{q-1-{\frac{2b}{N-1}}}\big)d(v_1,v_2)\nonumber\\
&\leq 2c \big(2c\|v_0\|_{H^2}\big)^{\frac{2b}{N-1}}\big(2\|e^{i\cdot\Delta^2}v_0\|_{\Lambda^{s_\nu}}\big)^{q-1-{\frac{2b}{N-1}}}d(v_1,v_2)\nonumber\\
&\leq 2^qc^{1+{\frac{2b}{N-1}}}\|v_0\|_{H^2}^{\frac{2b}{N-1}}\|e^{i\cdot\Delta^2}v_0\|_{\Lambda^{s_\nu}}^{q-1-{\frac{2b}{N-1}}}d(v_1,v_2).\label{4.12}
\end{align}

Moreover, letting $v_2=0$ in \eqref{4.12}, it follows that

\begin{align}
\|\digamma(v_1)\|_{\Lambda_{s_\nu}}
&\leq \|e^{i\cdot\Delta^2}v_0\|_{\Lambda_{s_\nu}}+2^qc^{1+{\frac{2b}{N-1}}}\|v_0\|_{H^2}^{\frac{2b}{N-1}}\|e^{i\cdot\Delta^2}v_0\|_{\Lambda^{s_\nu}}^{q-{\frac{2b}{N-1}}}.\label{4.13}
\end{align}

Furthermore, by Lemma \ref{nlest}, we have

\begin{align}
\|\langle\Delta\rangle \digamma(v_1)\|_{\Lambda}
&\leq c\|v_0\|_{H^2}+c\|v_1\|_{L^\infty(H^2)}^{\frac{2b}{N-1}}\|v_1\|_{\Lambda^{s_\nu}}^{q-1-{\frac{2b}{N-1}}}\|\langle\Delta\rangle v\|_{\Lambda}\nonumber\\
&\leq c\|v_0\|_{H^2}+c(2c\|v_0\|_{H^2})^{1+{\frac{2b}{N-1}}}\big(2\|e^{i\cdot\Delta^2}v_0\|_{\Lambda^{s_\nu}}\big)^{q-1-{\frac{2b}{N-1}}}\nonumber\\
&\leq c\|v_0\|_{H^2}+2^qc^{2+{\frac{2b}{N-1}}}\|v_0\|_{H^2}^{1+{\frac{2b}{N-1}}}\|e^{i\cdot\Delta^2}v_0\|_{\Lambda^{s_\nu}}^{q-1-{\frac{2b}{N-1}}}.\label{4.14}
\end{align}

Now, the stability of $\mathcal{X}$ under the flow of $\digamma$ and its contraction, read

\begin{align}
  2^qc^{1+{\frac{2b}{N-1}}}\|v_0\|_{H^2}^{\frac{2b}{N-1}}\|e^{i\cdot\Delta^2}v_0\|_{\Lambda^{s_\nu}}^{q-{\frac{2b}{N-1}}}<   \|e^{i\cdot\Delta^2}v_0\|_{\Lambda_{s_\nu}};\label{4.15'}\\
  2^qc^{2+{\frac{2b}{N-1}}}\|v_0\|_{H^2}^{1+{\frac{2b}{N-1}}}\|e^{i\cdot\Delta^2}v_0\|_{\Lambda^{s_\nu}}^{q-1-{\frac{2b}{N-1}}}<c\|v_0\|_{H^2}.\label{4.15}
\end{align}

The estimates \eqref{4.15}-\eqref{4.15'} are equivalent to

\begin{align} 
  \|e^{i\cdot\Delta^2}v_0\|_{\Lambda^{s_\nu}}<\min\Big\{\Big(\frac{1}{2^qc^{1+{\frac{2b}{N-1}}}\|v_0\|_{H^2}^{{\frac{2b}{N-1}}}}\Big)^\frac1{q-1-{\frac{2b}{N-1}}},\Big(\frac{1}{2^qc^{2+{\frac{2b}{N-1}}}\|v_0\|_{H^2}^{{\frac{2b}{N-1}}}}\Big)^\frac1{q-1-{\frac{2b}{N-1}}}\Big\}.\label{4.16}
\end{align}

The proof of the global existence and \eqref{gl1} is finished with a standard Picard fixed point argument. Now, we prove the energy scattering. Using the integral formula \eqref{ntg} via Strichartz estimates and \eqref{4.3}-\eqref{4.4}, we write for $t,t'>0$,

\begin{align}
    \|e^{-it\Delta^2}v(t)-e^{-it'\Delta^2}v(t')\|_{H^2}
    &\lesssim \|\langle\Delta\rangle\big(|x|^{b}|v|^{q-1}v\big)\|_{\Lambda(t,t')}\nonumber\\
    &\lesssim \|v\|_{L^\infty(H^2)}^{\frac{2b}{N-1}}\|v\|_{\Lambda^{s_\nu}(t,t')}^{q-1-{\frac{2b}{N-1}}}\|\langle\Delta\rangle v\|_{\Lambda(t,t')}\nonumber\\
    &\to0,\quad\mbox{as,}\quad t,t'\to\infty.\label{4.17}
\end{align}

Finally, taking $\psi:=\displaystyle\lim_{t\to\infty}e^{-it\Delta^2}v(t)$, in $H^2$, we get the energy scattering $v(t)\to e^{it\Delta^2}\psi$, in $H^2$. The proof of Theorem \ref{gl1} is achieved.

\hrule

\vspace{0.3cm}

\noindent{\bf\large Acknowledgement.}
The Researchers would like to thank the Deanship of Graduate Studies and Scientific Research at Qassim University for financial support (QU-APC-2026).\\

\vspace{0.3cm}

{\noindent{\bf\large Declarations.}}
\vspace{0.3cm}

\begin{itemize}
\item
Funding: Not applicable.
\item
Conflict of interest: Not applicable.
\item
Ethical approval: Not applicable.
\item
Informed consent: Not applicable.
\end{itemize}
\vspace{0.3cm}

\hrule




\end{document}